\definecolor{chianti}{rgb}{0.6,0,0}
\definecolor{meretale}{rgb}{0,0,.6}
\definecolor{leaf}{rgb}{0,.35,0}
\DeclareFontFamily{OMS}{rsfs}{\skewchar\font'60}
\DeclareFontShape{OMS}{rsfs}{m}{n}{<-5>rsfs5 <5-7>rsfs7 <7->rsfs10 }{}
\DeclareSymbolFont{rsfs}{OMS}{rsfs}{m}{n}
\DeclareSymbolFontAlphabet{\scr}{rsfs}
\numberwithin{equation}{section}
\newtheorem{theorem}{Theorem}[section]
\newtheorem{lemma}[theorem]{Lemma}
\newtheorem{prop}[theorem]{Proposition}
\newtheorem{corollary}[theorem]{Corollary}
\theoremstyle{definition}
\newtheorem{defn}[theorem]{Definition}
\newtheorem{example}[theorem]{Example}
\theoremstyle{remark}
\newtheorem{remark}[theorem]{Remark}
\newtheorem{question}[theorem]{Question}
\numberwithin{equation}{subsection}
\newcommand{\frakp}{\mathfrak{p}}
\newcommand{\RR}{\mathbb{R}}
\newcommand{\ZZ}{\mathbb{Z}}
\DeclareMathOperator{\Spec}{Spec}
\DeclareMathOperator{\Supp}{Supp}
\DeclareMathOperator{\bF}{\bf{F}}
\DeclareMathOperator{\coker}{coker}
\DeclareMathOperator{\im}{image}
\title{Koszul cohomology and support of local cohomology modules of complete intersections}
\author{Michael Gintz and Wenliang Zhang}
\address{Department of Mathematics, Statistics, and Computer Science, University of Illinois at Chicago,
Chicago, IL 60607}
\email{mgintz2@uic.edu, wlzhang@uic.edu}
\thanks{The authors acknowledge support from NSF through the grant DMS-1752081.}
\subjclass[2020]{13D45, 13D07}
\begin{document}

\maketitle

\begin{abstract}
Let $R$ be a noetherian commutative ring and $\underline{f}\in R$ be a regular sequence. We introduce a framework to study $\Supp(H^j_I(R/(\underline{f})))$ by linking the Koszul cohomology of $H^j_I(R)$ on the regular sequence $\underline{f}$ and local cohomology modules $H^j_I(R/(\underline{f}))$. As an application, we prove that if $R$ is a noetherian regular ring of prime characteristic $p$ and $f_1,f_2\in R$ form a regular sequence then $\Supp(H^j_I(R/(f_1,f_2)))$ is Zariski-closed for each integer $j$ and each ideal $I$. 
\end{abstract}


\section{Introduction}
Let $R$ be a noetherian commutative ring and $I$ be an ideal. Let $\Gamma_I$ denote the $I$-torsion functor defined via:
\[\Gamma_I(M)=\{z\in M\mid I^tz=0\ {\rm for\ some\ integer\ }t\};\quad \Gamma_I(M\xrightarrow{f}N)=\Gamma_I(M)\xrightarrow{f|_{\Gamma_I(M)}}\Gamma_I(N).\]
It turns out that $\Gamma_I$ is left-exact; the $j$-th local cohomology of an $R$-module $M$, denoted by $H^j_I(M)$, is defined as $\RR^j\Gamma_I(M)$; that is
\[H^j_I(M)\cong H^j(0\to Q^\bullet)\]
where $0\to M\to Q^{\bullet}$ is an injective resolution of $M$. It can be calculated by a \v{C}ech complex; {\it cf.} \S\ref{double complex and spectral sequence} for details.

Since the theory of local cohomology was introduced in \cite{SGA2}, the study of finiteness properties of these modules, as well as their vanishing, has become an active research topic. The interested reader is referred to \cite{HunekeLCProblems} for a list of inspiring open questions on vanishing and finiteness properties of local cohomology modules. One of these question asks whether the set of associated primes of $H^j_I(R)$ is finite for each integer $j$ and each ideal $I$ in $R$. Some positive answers are known: when $R$ is a regular ring of equi-characteristic $p$ (\cite{HunekeSharp}), when $R$ is either a regular local ring of equi-characteristic 0 or a regular affine ring of equi-characteristic 0 (\cite{Lyubeznik1993}), when $R$ is an unramified regular local ring of mixed characteristic (\cite{LyubeznikUnramified}), when $R$ is a smooth $\ZZ$-algebra, and when either $\dim(R)$ or $j$ is sufficiently small ({\it cf.} \cite{KS1999, BRS00, Hellus2001}). Examples in \cite{SinghExample2000, Katzman2002, SinghSwanson} show that local cohomology modules may have infinitely many associated primes. However, the following question ({\it cf.} \cite[p.~3194]{HunekeKatzMarley}) remains open:

\begin{question}
\label{ques: closed}
Let $R$ be a noetherian commutative ring and $I$ be an ideal. Is $\Supp(H^j_I(R))$ Zariski-closed in $\Spec(R)$ for each integer $j$? 
\end{question}

Note that $\Supp(H^j_I(R))$ being Zariski-closed is equivalent to having finitely many \emph{minimal} associated primes. Hence Question \ref{ques: closed} concerns with a finiteness property of local cohomology modules. \cite[p.~3195]{HunekeKatzMarley} states that ``Clearly, this question is of central
importance in the study of cohomological dimension and understanding the local–global properties of local cohomology.'' Some positive answers to Question \ref{ques: closed} are known: when $j=2$ and $H^t_I(R)=0$ for all $t>2$ ( \cite[Theorem~1.2]{HunekeKatzMarley}) an when $R=S/(f)$ where $S$ is a noetherian regular ring of prime characteristic $p$ (\cite{HochsterNB, KatzmanZhang}).

One of the main results of this article is the following:
\begin{theorem}[=Theorem \ref{closed for lc of ci}]
\label{main thm on closedness}
Let $S$ be a noetherian regular ring of prime characteristic $p$ and $f_1,f_2$ be a regular sequence in $S$. Set $R=S/(f_1,f_2)$. Then $\Supp(H^j_I(R))$ is Zariski-closed for each integer $j$ and each ideal $I$.
\end{theorem}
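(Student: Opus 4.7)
My plan is to invoke the spectral sequence attached to the double complex $\check{C}^\bullet(I;S)\otimes K^\bullet(f_1,f_2;S)$ developed in \S\ref{double complex and spectral sequence}. Since $f_1,f_2$ is an $S$-regular sequence, $K^\bullet(f_1,f_2;S)$ resolves $R$, and one spectral sequence of the double complex takes the form
$$E_2^{p,q}=H^p\bigl(K^\bullet(f_1,f_2;H^q_I(S))\bigr)\Longrightarrow H^{p+q}_I(R).$$
With only two Koszul variables, the $E_2$-page is concentrated in columns $p=0,1,2$, the only possibly nonzero higher differential is $d_2\colon E_2^{0,q+1}\to E_2^{2,q}$, and hence $E_3=E_\infty$. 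Consequently $H^n_I(R)$ carries a three-step filtration whose graded pieces are $E_\infty^{p,n-p}$ for $p=0,1,2$, so it suffices to show that each of these three modules has Zariski-closed support.

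Since $S$ is regular of characteristic $p$, Huneke--Sharp \cite{HunekeSharp} guarantees that each $M_q\colonequals H^q_I(S)$ has finitely many associated primes. The right column $E_2^{2,q}=M_q\otimes_S R$ has support $\Supp(M_q)\cap V(f_1,f_2)$, hence is closed; the left column $E_2^{0,q}=(0:_{M_q}(f_1,f_2))$ is a submodule of $M_q$, so it too has finitely many associated primes. The middle column is controlled by the Künneth-type short exact sequence
$$0\to(M_q/f_1M_q)[f_2]\to E_2^{1,q}\to M_q[f_1]/f_2M_q[f_1]\to 0$$
arising from $K^\bullet(f_1,f_2;M_q)\cong K^\bullet(f_2)\otimes K^\bullet(f_1;M_q)$. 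The right-hand term is a quotient of $M_q[f_1]\subseteq M_q$, so has finite $\operatorname{Ass}$. The left-hand term embeds, via the long exact sequence of $0\to S\xrightarrow{f_1}S\to S/(f_1)\to 0$, into $H^q_I(S/(f_1))$; combining the hypersurface case \cite{HochsterNB,KatzmanZhang} with the framework of the earlier sections should propagate closedness of support from $H^q_I(S/(f_1))$ to the submodule $(M_q/f_1M_q)[f_2]$.

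It remains to pass from the $E_2$-page to $E_\infty$. The pieces $E_\infty^{0,n}\subseteq E_2^{0,n}$ and $E_\infty^{1,n-1}=E_2^{1,n-1}$ inherit the required finiteness directly. The delicate piece is $E_\infty^{2,n-2}=E_2^{2,n-2}/\operatorname{im}(d_2^{0,n-1})$, and this is the main obstacle, because closedness of support is not preserved by arbitrary quotients. My plan to handle it is to identify $\operatorname{im}(d_2^{0,n-1})$ concretely from the double complex---it is a homomorphic image of $M_{n-1}[(f_1,f_2)]$, a module with finitely many associated primes---so that $E_\infty^{2,n-2}$ sits in a short exact sequence whose outer terms correspond to local cohomology pieces of the hypersurface $S/(f_1)$, to which the known hypersurface case again applies. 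Assembling the three resulting closed sets across the filtration of $H^n_I(R)$ then yields closedness of $\Supp H^n_I(R)$.
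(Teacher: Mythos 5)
Your global architecture is exactly the paper's: the Koszul--\v{C}ech double complex, the degeneration of the second spectral sequence because $f_1,f_2$ is $S$-regular, the three-step filtration of $H^n_I(R)$ by $E_\infty^{p,n-p}$ for $p=0,1,2$, and the reduction to closedness of each $E_\infty^{p,q}$ (this is the paper's Theorem \ref{closedness in one spectral implies the other}). Your treatment of $E_\infty^{0,n}$ is correct and in fact shorter than the paper's: it is a submodule of $H^n_I(S)$, which has finitely many associated primes by Huneke--Sharp, and finiteness of $\mathrm{Ass}$ passes to submodules. The rest of the proposal, however, has genuine gaps, all with the same root cause: for modules that are not finitely generated, neither finiteness of $\mathrm{Ass}$ nor closedness of support survives passage to quotients, and closedness of support does not survive passage to submodules.

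Concretely: (1) The identity $\Supp(M_q\otimes_S R)=\Supp(M_q)\cap V(f_1,f_2)$ is a Nakayama-type statement that fails for non-finitely-generated $M_q$ (e.g.\ $\Supp(S_{f_1}/f_1S_{f_1})=\emptyset$ while $\Supp(S_{f_1})\cap V(f_1)$ need not be); so your argument for $E_2^{2,q}$ does not work. This is precisely why the paper proves Theorem \ref{Koszul two ends closed}, whose $H^c$ half is a nontrivial stabilization argument with the matrices $U^{[p^j]}\cdots U$ coming from a root of the $F$-finite $F$-module. (2) In the middle column, the term $M_q[f_1]/f_2M_q[f_1]$ is again a quotient, and the term $(M_q/f_1M_q)[f_2]$ embeds into $H^q_I(S/f_1)$, but the hypersurface results only give closedness of $\Supp(H^q_I(S/f_1))$, not finiteness of its associated primes (Katzman's example shows these can be infinite), so closedness does not descend to the submodule. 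The paper's entire \S\ref{1st Koszul length 2} (roots $L\subseteq \bF(L)$, the torsion reduction via $\Gamma_{(f_1,f_2)}$, and Claims 1--3) exists to close exactly this gap. (3) For $E_\infty^{2,n-2}=\coker(d_2^{0,n-1})$ you correctly identify the obstacle but only sketch a plan; the paper's resolution (Theorem \ref{cokernel of edge map}) hinges on the Frobenius-truncated \v{C}ech complexes of \S\ref{truncated Cech complex}, which reduce the question to a finitely generated submodule $H\subseteq H^{n-2}_I(S)$ and propagate the answer by applying $\bF^e$; nothing in your proposal substitutes for this mechanism. So the skeleton is right, but the three hard closedness statements that the paper's \S\S3--6 are devoted to remain unproved in your write-up.
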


Our strategy to prove Theorem \ref{main thm on closedness} is to link the Koszul cohomology groups of $H^j_I(R)$ on a sequence $\underline{f}$ to the local cohomology modules $H^i_I(R/(\underline{f}))$ via a double complex. To wit, let $R$ be a noetherian ring and $\underline{f}=f_1,\dots,f_c$ be a sequence of elements. Let $I=(g_1,\dots,g_t)$ be an ideal in $R$. Let $\check{C}^\bullet(\underline{g};N)$ denote the \v{C}ech complex of an $R$-module $N$ on the sequence $\underline{g}$ and let $K^{\bullet}(\underline{f};N)$ denote the Koszul (co)complex of an $R$-module $N$ on the sequence $\underline{f}$. Let ${\bf D}$ denote the double complex whose $i$-the row is the \v{C}ech complex $\check{C}^{\bullet}(\underline{g};K^i(\underline{f};R))$ and whose $j$-th column is the the Koszul (co)complex $K^{\bullet}(\underline{f}; C^j(\underline{g};R))$. Then there is a spectral sequence
\[E^{i,j}_2:=H^i(K^{\bullet}(\underline{f}; H^j_I(R))\Rightarrow H^{i+j}(T^\bullet)\]
associated with ${\bf D}$, where $T^\bullet$ denotes the total complex of ${\bf D}$ ({\it cf.} \S\ref{double complex and spectral sequence} for details). The following theorem provides a framework to study $\Supp(H^k_I(R/(\underline{f})))$ via investigating $H^i(K^{\bullet}(\underline{f}; H^j_I(R))$.

\begin{theorem}[=Theorem \ref{closedness in one spectral implies the other}]
\label{thm: intro spectral}
Let $R$ be a noetherian ring, $I=(g_1,\dots,g_t)$ be an ideal, and $f_1,\dots,f_c$ be a sequence of elements in $R$. Let $E^{\bullet,\bullet}_2$ be as above. Assume that
\begin{enumerate}
\item $\Supp(E^{i,j}_{\infty})$ are Zariski-closed for all integers $i,j$, and that
\item $f_1,\dots,f_c$ form a regular sequence in $R$.
\end{enumerate}
Then $\Supp(H^k_I(R/(f_1,\dots,f_c)))$ is Zariski-closed for each integer $k$.
\end{theorem}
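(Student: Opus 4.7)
The plan is to exploit the \emph{other} spectral sequence of the double complex $\mathbf{D}$, obtained by first taking Koszul cohomology along columns and then \v{C}ech cohomology along rows, which converges to the same abutment $H^\bullet(T^\bullet)$. Since $\mathbf{D}$ lives in the bounded range $0\le i\le c$ and $0\le j\le t$, both spectral sequences converge strongly and induce finite filtrations on their common abutment.

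My first step is to identify $H^n(T^\bullet)$ using this other spectral sequence. Each \v{C}ech module $\check{C}^j(\underline{g};R)$ is a localization of $R$, hence flat, so Koszul cohomology commutes with the tensor product:
\[
H^i\bigl(K^\bullet(\underline{f};\check{C}^j(\underline{g};R))\bigr)\;\cong\;H^i(K^\bullet(\underline{f};R))\otimes_R \check{C}^j(\underline{g};R).
\]
By hypothesis (2), $\underline{f}$ is regular, so $H^i(K^\bullet(\underline{f};R))$ vanishes for $i<c$ and equals $R/(\underline{f})$ for $i=c$. The corresponding $E_1$-page thus collapses to the single row $i=c$, which is the \v{C}ech complex $\check{C}^\bullet(\underline{g};R/(\underline{f}))$; its cohomology is $H^j_I(R/(\underline{f}))$, the spectral sequence degenerates at $E_2$, and we obtain
\[
H^n(T^\bullet)\;\cong\;H^{n-c}_I(R/(\underline{f})).
\]

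My second step is to transfer the hypothesis on $E_\infty$-supports to the abutment using the given spectral sequence. Boundedness of $\mathbf{D}$ yields a finite filtration on each $H^n(T^\bullet)$ with successive quotients $E_\infty^{i,j}$, $i+j=n$. Since the support of a finitely filtered module equals the union of the supports of its graded pieces, combining with Step~1 gives
\[
\Supp\bigl(H^k_I(R/(\underline{f}))\bigr)\;=\;\Supp\bigl(H^{k+c}(T^\bullet)\bigr)\;=\;\bigcup_{i+j=k+c}\Supp(E_\infty^{i,j}),
\]
a finite union of Zariski-closed sets by hypothesis (1), hence closed. The main conceptual hurdle is recognizing that the two spectral sequences of $\mathbf{D}$ should be played against one another, using the other one purely to identify the abutment and the given one to access supports; once those identifications are in place, the rest is the standard "support of a bounded-filtration abutment" principle and I expect no further serious obstacle.
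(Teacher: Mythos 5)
Your proposal is correct and follows essentially the same route as the paper: use the second spectral sequence of $\mathbf{D}$ (which collapses to a single row because $\underline{f}$ is regular) to identify the abutment with $H^\bullet_I(R/(\underline{f}))$, and use the finite filtration coming from the first spectral sequence to express the support of the abutment as a finite union of the closed sets $\Supp(E^{i,j}_\infty)$. Your write-up is in fact slightly more careful than the paper's, as it records the degree shift $H^n(T^\bullet)\cong H^{n-c}_I(R/(\underline{f}))$ and justifies the collapse via flatness of the \v{C}ech modules.
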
  

This article is organized a follows. In \S\ref{double complex and spectral sequence}, we introduce and study a double complex which links the Koszul cohomology of $H^j_I(R)$ on a sequence $\underline{f}$ and the local cohomology modules $H^j_I(R/(\underline{f}))$ and prove Theorem \ref{thm: intro spectral}; \S\ref{double complex and spectral sequence} is characteristic-free and does not require $R$ to be regular. In \S\ref{truncated Cech complex}, we introduce the notion of the (Frobenius) truncation of \v{C}ech complexes which is one of the main technical tools in this article. In \S\ref{Koszul of F-modules} and \S\ref{1st Koszul length 2}, we prove that $H^i(K^\bullet(f_1,f_2;\scr{M}))$ has Zariski-closed support when $f_1,f_2$ form a regular sequence in regular ring $R$ of prime characteristic $p$ and $\scr{M}$ is an $F$-finite $F$-module. In \S\ref{support of E infinity}, we complete the proof of Theorem \ref{main thm on closedness}.
\section{A Koszul-\v{C}ech double complex and related spectral sequences}
\label{double complex and spectral sequence}
Let $R$ be a commutative noetherian ring and $f_1,\dots,f_c$ and $g_1,\dots,g_t$ be two sequences of elements in $R$. Set $I=(g_1,\dots,g_t)$ to be the ideal generated by $g_1,\dots,g_t$. For each $R$-module $N$, 
\begin{enumerate}
\item we denote by $K^{\bullet}(\underline{f};N)$ the Koszul co-complex of $N$ on the elements $f_1,\dots,f_c$, which is the $R$-dual of the Koszul complex $K_{\bullet}(\underline{f};N)$,  and
\item we denote by $\check{C}^{\bullet}(\underline{g};N)$ the $\check{C}$ech complex of $N$ on $g_1,\dots,g_t$:
 \[0\to N\xrightarrow{\delta^0} \oplus_{i=1}^tN_{g_i}\xrightarrow{\delta^1} \oplus_{i_1<i_2}N_{g_{i_1}g_{i_2}}\xrightarrow{\delta^2}\cdots \to N_{g_1\cdots g_t}\to 0,\]
 where $\delta^i$ is defined via $\delta^i: N_{g_{j_1}\cdots g_{j_i}}\to N_{g_{\ell_1}\cdots g_{\ell_{i+1}}}$ is defined as 
\[\delta^i(\frac{z}{g^n_{j_1}\cdots g^n_{j_i}})=\begin{cases} (-1)^{s-1}\frac{z}{g^n_{j_1}\cdots g^n_{j_i}} & {\rm when\ }j_1\cdots j_i=\ell_{1}\cdots \hat{\ell}_s\cdots \ell_{i+1}\\ 0&{\rm otherwise}\end{cases}\]
for each $z\in N$. Note that $H^j(\check{C}^{\bullet}(\underline{g};N))\cong H^j_I(N)$.
 \end{enumerate}

\begin{defn}
\label{defn: Cech-Koszul double}
The double complex, denoted by 
${\bf D}:=D(K^{\bullet}(\underline{f});\check{C}^{\bullet}(\underline{g}))$ 
is the double complex complex whose $i$-the row is the \v{C}ech complex $\check{C}^{\bullet}(\underline{g};K^i(\underline{f};R))$ and whose $j$-th column is the the Koszul (co)complex $K^{\bullet}(\underline{f}; C^j(\underline{g};R))$.

We will denote the total complex of ${\bf D}$ by $T^{\bullet}$.
\end{defn}

\begin{example}[When $t=2$]
The most relevant case for this article is when $t=2$ and we would like to spell out the double complex as follows. The Koszul (co)complex $K^{\bullet}(f_1,f_2;N)$ is the following for each $R$-module $N$:
\[0\to N\xrightarrow{{\small \begin{pmatrix}f_1\\f_2 \end{pmatrix}}} N^{\oplus 2}\xrightarrow{{\small \begin{pmatrix}-f_2& f_1 \end{pmatrix}}}N\to 0\]

The Koszul-\v{C}ech double complex in this case is the following:
\begin{equation}
\label{KC double complex}
\xymatrix{
& 0 & 0& 0 & & 0 \\
	0\ar[r] & R \ar[r]\ar[u] &{\bigoplus_{j} R_{g_j}} \ar[r]\ar[u] &  {\bigoplus_{j_1<j_2} R_{g_{j_1}g_{j_2}}} \ar[r]\ar[u] & \cdots \ar[r] & {R_{g_{1}\cdots g_{t}}}\ar[r]\ar[u] & 0 \ar[l] \\
	0\ar[r] & R^{\oplus 2} \ar[r]\ar[u]^{{\tiny \begin{pmatrix}-f_2& f_1 \end{pmatrix}}} & {(\bigoplus_{j} R_{g_{j}})^{\oplus 2}} \ar[r]\ar[u]^{{\tiny \begin{pmatrix}-f_2& f_1 \end{pmatrix}}} & {(\bigoplus_{j_1<j_2} R_{g_{j_1}g_{j_2}})^{\oplus 2}} \ar[r]\ar[u]^{{\tiny \begin{pmatrix}-f_2& f_1 \end{pmatrix}}}  & \cdots \ar[r]  & {(R_{g_{1}\cdots g_{t}})^{\oplus 2}} \ar[r]\ar[u]^{{\tiny \begin{pmatrix}-f_2& f_1 \end{pmatrix}}}  & 0 \\
	0\ar[r] & R  \ar[r]\ar[u]^{{\tiny \begin{pmatrix}f_1\\f_2 \end{pmatrix}}} & {\bigoplus_{j} R_{g_j}} \ar[r]\ar[u]^{{\tiny \begin{pmatrix}f_1\\f_2 \end{pmatrix}}} & {\bigoplus_{j_1<j_2} R_{g_{j_1}g_{j_2}}} \ar[r]\ar[u]^{{\tiny \begin{pmatrix}f_1\\f_2 \end{pmatrix}}} & \cdots\ar[r] & {R_{g_{1}\cdots g_{t}}} \ar[r]\ar[u]^{{\tiny \begin{pmatrix}f_1\\f_2 \end{pmatrix}}} & 0 \\
	& 0\ar[u] & 0\ar[u]& 0\ar[u] & & 0\ar[u]
}
\end{equation}
\end{example}

\begin{remark}
\label{spectral from double}
As discussed in \cite[\S5.1]{WeibelBook}, there are two spectral sequences associated with our complex $D(K^{\bullet}(\underline{f});\check{C}^{\bullet}(\underline{g}))$. 

One of the them comes from taking horizontal differentials (in the \v{C}ech complexes) first and then vertical differentials (in the resulted Koszul co-complexes). The resulted spectral sequence is:
\[E^{i,j}_2:=H^i(K^{\bullet}(\underline{f}; H^j_I(R))\Rightarrow H^{i+j}(T^\bullet)\]
Recall that $T^\bullet$ is the total complex of $D(K^{\bullet}(\underline{f});\check{C}^{\bullet}(\underline{g}))$.

 The other one comes from doing differentials the other way around (considering vertical differentials and then horizontal differentials):
 \[{'E}^{i,j}_2:=H^i_I(H^j(K^{\bullet}(\underline{f};R))\Rightarrow H^{i+j}(T^{\bullet})\]
\end{remark}

The following theorem, one of our main technical tools, indicates the connection between $\Supp(E^{i,j}_{\infty})$ and $\Supp(H^k_I(R/(f_1,\dots,f_s)))$ when $f_1,\dots,f_s$ form a regular sequence in $R$.

\begin{theorem}
\label{closedness in one spectral implies the other}
Assume that
\begin{enumerate}
\item $\Supp(E^{i,j}_{\infty})$ are Zariski-closed for all integers $i,j$, and that
\item $f_1,\dots,f_c$ form a regular sequence in $R$.
\end{enumerate}
Then $\Supp(H^k_I(R/(f_1,\dots,f_c)))$ is Zariski-closed for each integer $k$.
\end{theorem}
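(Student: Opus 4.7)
The plan is to play the two spectral sequences of Remark~\ref{spectral from double} against each other. The second one $'E$ will compute $H^\bullet(T^\bullet)$ cleanly thanks to the regular-sequence hypothesis, while the first one $E$ will sandwich $H^\bullet(T^\bullet)$ between the $E^{i,j}_\infty$ terms whose supports are given to be Zariski-closed.

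First I would exploit hypothesis~(2). Since $\underline{f}=f_1,\dots,f_c$ is a regular sequence in $R$, the Koszul complex $K_\bullet(\underline{f};R)$ is a free resolution of $R/(\underline{f})$, so its dual, the Koszul co-complex $K^\bullet(\underline{f};R)$, has cohomology concentrated in top degree $c$, with $H^c(K^\bullet(\underline{f};R))\cong R/(f_1,\dots,f_c)$ and $H^j(K^\bullet(\underline{f};R))=0$ for $j\neq c$. Plugging this into
\[{'E}^{i,j}_2=H^i_I\bigl(H^j(K^\bullet(\underline{f};R))\bigr)\Rightarrow H^{i+j}(T^\bullet)\]
shows that $'E_2$ is concentrated on the single row $j=c$, where it equals $H^i_I(R/(\underline{f}))$. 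A one-row spectral sequence collapses at $E_2$, yielding natural isomorphisms
\[H^k(T^\bullet)\;\cong\; H^{k-c}_I\bigl(R/(f_1,\dots,f_c)\bigr)\]
for every integer $k$.

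Next I would invoke hypothesis~(1) via the first spectral sequence $E^{i,j}_r\Rightarrow H^{i+j}(T^\bullet)$. Because $\mathbf{D}$ is a bounded double complex (its Koszul direction has $c+1$ slots and its \v{C}ech direction has $t+1$), the convergence is strong and each $H^k(T^\bullet)$ carries a finite filtration whose successive quotients are exactly the $E^{i,k-i}_\infty$. Since $\Supp(B)=\Supp(A)\cup\Supp(C)$ for any short exact sequence $0\to A\to B\to C\to 0$, iterating along the filtration gives
\[\Supp\bigl(H^k(T^\bullet)\bigr)\;=\;\bigcup_{i+j=k}\Supp\bigl(E^{i,j}_\infty\bigr),\]
a finite union of Zariski-closed sets by hypothesis~(1), and hence Zariski-closed. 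Combined with the isomorphism of the previous paragraph (and a shift $k\mapsto k+c$), this identifies $\Supp(H^k_I(R/(\underline{f})))$ as a Zariski-closed subset of $\Spec(R)$ for every $k$.

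There is no serious conceptual obstacle here; the proof is a tidy interaction between the two spectral sequences of ${\bf D}$. The only points that require care are bookkeeping ones: verifying that the Koszul co-complex (as defined in \S\ref{double complex and spectral sequence} via $R$-duality of $K_\bullet$) really has cohomology living in top degree $c$ under the regular-sequence hypothesis, and confirming that the filtration on $H^k(T^\bullet)$ coming from the first spectral sequence produces exactly the union $\bigcup_{i+j=k}\Supp(E^{i,j}_\infty)$ rather than something off by a shift.
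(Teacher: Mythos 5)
Your proposal is correct and follows essentially the same route as the paper: collapse the second spectral sequence $'E$ to a single row using the regular-sequence hypothesis to identify $H^\bullet(T^\bullet)$ with $H^\bullet_I(R/(\underline{f}))$, and use the finite filtration from the first spectral sequence together with hypothesis (1) to get closedness of $\Supp(H^k(T^\bullet))$. Your extra care about the degree shift $H^k(T^\bullet)\cong H^{k-c}_I(R/(\underline{f}))$ is a minor bookkeeping point the paper elides, and it does not affect the conclusion.
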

\begin{proof}
The convergence
\[E^{i,j}_2:=H^i(K^{\bullet}(\underline{f}; H^j_I(R))\Rightarrow H^{i+j}(T^{\bullet})\]
amounts to a filtration of $H^k(T^\bullet)$ for each $k$:
\[0\subseteq F^kH^k(T^{\bullet})\subseteq F^{k-1}H^k(T^{\bullet})\subseteq \cdots\subseteq F^1H^k(T^{\bullet})\subseteq F^0H^k(T^{\bullet})=H^k(T^{\bullet})\]
such that $F^iH^k(T^{\bullet})/F^{i+1}H^{k}(T^\bullet)\cong E^{i,n-i}_{\infty}$ (with $F^kH^k(T^\bullet)\cong E^{k,0}_\infty$). 

Since $E^{i,j}_{\infty}$ is Zariski closed for all integers $i,j$, the Zariski-closedness of $\Supp(H^k(T^\bullet))$ follows from the filtration of $H^k(T^\bullet)$.

The assumption that $f_1,\dots,f_s$ form a regular sequence in $R$ implies that ${'E}^{\bullet,\bullet}_2$ has only one nonzero row in which the entries are $H^i_I(R/(f_1,\dots,f_c))$. Consequently $H^k_I(R/(f_1,\dots,f_c))\cong H^k(T^{\bullet})$ which shows that $\Supp(H^k_I(R/(f_1,\dots,f_c))$ is Zariski closed.
\end{proof}

In \S\ref{support of E infinity}, we will prove that $\Supp(E^{i,j}_{\infty})$ are Zariski-closed for all integers $i,j$ when $R$ is \emph{regular} of prime characteristic $p$ and $E^{i,j}_{\infty}$ are associated with the double complex (\ref{KC double complex}). One of our technical tools is to truncate the \v{C}ech complex.

\section{Truncated Cech complexes}
\label{truncated Cech complex}

In this section we explain truncated \v{C}ech complexes, one of the main technic tools needed in this article.

Let $R$ be a Noetherian commutative ring of prime characteristic $p>0$ and let $g\in R$ be an element in $R$. We will use $R\cdot \frac{1}{g^{p^e}}$ denote the cyclic $R$-submodule of $R_f$ generated by $\frac{1}{g^{p^e}}$, and we will call $R\cdot \frac{1}{g^{p^e}}$ the $e$-th (Frobenius) truncation of $R_g$. (Our convention is to consider $R\cdot \frac{1}{g}$ as the $0$-th Frobenius truncation of $R_g$.)

Note that $R\cdot \frac{1}{g^{p^e}}$ is a finitely generated $R$-module; this finiteness plays a crucial role in this article. 

\begin{remark}
Let $g_1,\dots,g_t$ be elements in $R$. Recall that $\check{C}^\bullet(\underline{g};R)$, the \v{C}ech complex of $R$ on $g_1,\dots,g_t$, is constructed as follows:
\[0\to R\to \bigoplus_{j=1}^tR_{g_j}\to  \cdots \to \bigoplus_{j_1<\cdots<j_i}R_{g_{j_1}\cdots g_{j_i}}\xrightarrow{\delta^i}\bigoplus_{j_1<\cdots<j_{i+1}}R_{g_{j_1}\cdots g_{j_{i+1}}}\to \cdots\to R_{g_1\cdots g_t}\to 0\]
where $\delta^i$ is defined via $\delta^i: R_{g_{j_1}\cdots g_{j_i}}\to R_{g_{\ell_1}\cdots g_{\ell_{i+1}}}$ is defined as 
\begin{equation}
\label{equ: i-th diff in Cech}
\delta^i(\frac{r}{g^n_{j_1}\cdots g^n_{j_i}})=\begin{cases} (-1)^{s-1}\frac{r}{g^n_{j_1}\cdots g^n_{j_i}} & {\rm when\ }j_1\cdots j_i=\ell_{1}\cdots \hat{\ell}_s\cdots \ell_{i+1}\\ 0&{\rm otherwise}\end{cases}
\end{equation}

Then it is clear that the image of the restriction of $\delta^i$ on $R\cdot \frac{1}{g^{p^e}_{j_1}\cdots g^{p^e}_{j_i}}$ is contained in $R\cdot \frac{1}{g^{p^e}_{\ell_1}\cdots g^{p^e}_{\ell_{i+1}}}$. Consequently, if one replaces each module in the Cech complex $C^\bullet(\underline{g};R)$ by its $e$-th truncation, then one will get a complex
\begin{equation}
\label{equ: e-th truncation of cech}
0\to R\to \bigoplus_{j=1}^tR\cdot \frac{1}{g^{p^e}_j}\to  \bigoplus_{j_1<j_2}R\cdot \frac{1}{g^{p^e}_{j_1}g^{p^e}_{j_2}}\to\cdots
\end{equation}
\end{remark}

\begin{defn}
The complex (\ref{equ: e-th truncation of cech}) is called the $e$-th truncation of the \v{C}ech complex $\check{C}^\bullet(\underline{g};R)$ and will be denoted by $\check{C}^\bullet(\underline{g};R)_e$ or $\check{C}^\bullet_e$ when the elements $g_1,\dots,g_t$ are clear from the context. The $i$-th term in $\check{C}^\bullet(\underline{g};R)_e$ will be denoted by $\check{C}^i(\underline{g};R)_e$ and the $i$-th differential in $\check{C}^\bullet(\underline{g};R)_e$ will be denoted by $\delta^i_e$.
\end{defn}

For each element $\eta\in \ker(\delta^i)$ (respectively $\eta\in \ker(\delta^i_e)$), its image in $H^i(\check{C}^\bullet(\underline{g};R))$ (respectively $H^i(\check{C}^\bullet(\underline{g};R)_e)$) will be denoted by $[\eta]$.

Let $R$ be a noetherian ring of prime characteristic $p$. Let $R^{(e)}$ be the additive group of $R$ regarded as an $R$-bimodule with the usual left $R$-action and with the right $R$-action defined by $r'r= r^{p^e}r'$ for all $r\in R$ and $r'\in R^{(e)}$. The $e$-th Peskine-Szipro functor $\bF^e$ is defined via
\[\bF(M)=R^{(e)}\otimes_RM\quad \bF(M\xrightarrow{\phi} N)=R^{(e)}\otimes_RM\xrightarrow{{\bf 1}\otimes \phi}R^{(e)}\otimes_RN.\]
When $e=1$, we will denote $\bF^1$ by $\bF$. 

Note that, when $R$ is regular, $R^{(e)}$ is a faithfully flat $R$-module and hence $\bF^e$ is an exact functor for each $e\geq 1$ (\cite{Kunz1969}).

\begin{prop}
\label{apply Frob to truncated Cech}
Let $R$ be a Noetherian regular ring of prime characteristic $p>0$ and let $\bF$ denote the Peskine-Szpiro functor. Then
\begin{enumerate}
\item $\bF(R\cdot \frac{1}{g})\cong R\cdot \frac{1}{g^{p}}$ for every $g\in R$.
\item $\bF (\check{C}^\bullet(\underline{g};R)_e)\cong \check{C}^\bullet(\underline{g};R)_{e+1}$ for all sequences of elements $\underline{g}=g_1,\dots,g_t$.
\end{enumerate}
\end{prop}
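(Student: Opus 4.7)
My plan is to establish (1) first by representing $R \cdot \tfrac{1}{g}$ as the image of a single copy of $R$ inside a colimit description of $R_g$, and then to deduce (2) by applying (1) term-by-term and checking that the differentials match.

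For (1), write $R_g$ as the filtered colimit
\[R_g \;\cong\; \varinjlim\bigl(R \xrightarrow{\cdot g} R \xrightarrow{\cdot g} R \to \cdots\bigr),\]
where the $n$-th copy of $R$ maps to $R_g$ via $r \mapsto r/g^n$ and therefore has image exactly $R \cdot \tfrac{1}{g^n}$. Since $R$ is regular, $\bF$ is exact; moreover $\bF$ always commutes with direct limits and sends multiplication by $g$ on $R$ to multiplication by $g^p$. Applying $\bF$ to the colimit diagram therefore gives
\[\bF(R_g) \;\cong\; \varinjlim\bigl(R \xrightarrow{\cdot g^p} R \xrightarrow{\cdot g^p} R \to \cdots\bigr) \;\cong\; R_{g^p} \;=\; R_g,\]
and under this canonical identification $\bF(R \cdot \tfrac{1}{g})$ is the image of the first copy of $R$, which is $R \cdot \tfrac{1}{g^p}$. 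The identical argument with $g$ replaced by $g^{p^e}$ gives the variant $\bF(R \cdot \tfrac{1}{g^{p^e}}) \cong R \cdot \tfrac{1}{g^{p^{e+1}}}$ that will be used in part (2).

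For (2), applying this variant to each product $g = g_{j_1} \cdots g_{j_i}$ produces termwise isomorphisms $\bF(\check{C}^i(\underline{g};R)_e) \cong \check{C}^i(\underline{g};R)_{e+1}$. To promote these to an isomorphism of complexes, I would observe that $\check{C}^\bullet(\underline{g};R)_e$ embeds as a subcomplex of the full \v{C}ech complex $\check{C}^\bullet(\underline{g};R)$, and that the same colimit reasoning applied to each localization gives a canonical identification $\bF(\check{C}^\bullet(\underline{g};R)) \cong \check{C}^\bullet(\underline{g};R)$ respecting the \v{C}ech differentials. Exactness of $\bF$ then carries the inclusion $\check{C}^\bullet(\underline{g};R)_e \hookrightarrow \check{C}^\bullet(\underline{g};R)$ to the inclusion $\check{C}^\bullet(\underline{g};R)_{e+1} \hookrightarrow \check{C}^\bullet(\underline{g};R)$, and since the differentials of both truncations are just restrictions of those on the ambient complex, they must agree under the termwise iso, finishing (2).

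The only delicate bookkeeping, though not conceptually difficult, is the Frobenius slide inside the tensor: a factor $g_{\ell_s}^{p^e}$ appearing in the formula for $\delta^i_e$ must become $g_{\ell_s}^{p^{e+1}}$ after $\bF$ and the canonical identifications in order to reproduce $\delta^i_{e+1}$. Performing the verification inside the ambient \v{C}ech complex via the colimit picture makes this conversion automatic from the transition maps $\cdot g^p$ (versus $\cdot g$); a direct computation on tensors would also work but is more tedious.
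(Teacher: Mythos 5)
Your argument is correct, and the only genuine difference from the paper's proof lies in how part (1) is obtained. The paper exhibits the isomorphism explicitly, via $r'\otimes\frac{r}{g}\mapsto\frac{r'r^p}{g^p}$ together with the candidate inverse $\frac{r}{g^p}\mapsto r\otimes\frac{1}{g}$, whereas you realize $R\cdot\frac{1}{g^n}$ as the image of the $n$-th stage of the colimit presentation $R_g\cong\varinjlim\bigl(R\xrightarrow{\,g\,}R\to\cdots\bigr)$ and transport that image through the canonical identification $\bF(R_g)\cong R_g$, using that $\bF$ commutes with colimits and is exact. The two routes prove the same statement, but yours has the mild advantage of never having to check that the explicit inverse is well defined (the assignment $\frac{r}{g^p}\mapsto r\otimes\frac{1}{g}$ needs flatness of $R^{(1)}$ to be legitimate when $g$ is a zerodivisor, since $r\mapsto r/g^p$ can have nonzero kernel); in your version exactness of $\bF$ identifies $\bF$ of a submodule with a submodule of $\bF$ of the ambient module from the outset, so the issue never arises. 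For part (2) the two proofs are essentially the same: the paper's commutative square, whose vertical maps are the isomorphisms from (1) and whose horizontal maps are the \v{C}ech differentials, is precisely your observation that $\delta^i_e$ is the restriction of the ambient differential $\delta^i$ and that the canonical identification $\bF(\check{C}^\bullet(\underline{g};R))\cong\check{C}^\bullet(\underline{g};R)$ is an isomorphism of complexes, so the termwise isomorphisms automatically assemble into an isomorphism of the truncated complexes.
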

\begin{proof}
Note that $\bF$ is an exact functor since $R$ is regular. 

To prove the first part, it suffices to note that the $R$ linear map 
\[\theta: \bF(R\cdot \frac{1}{g})=R^{(1)}\otimes_RR\cdot \frac{1}{g}\xrightarrow{r'\otimes \frac{r}{g}\mapsto \frac{r'r^p}{g^p}}R\cdot \frac{1}{g^p}\]
admits an inverse
\[R\cdot \frac{1}{g^p}\xrightarrow{\frac{r}{g^p}\mapsto r\otimes\frac{1}{g}} R^{(1)}\otimes_RR\cdot \frac{1}{g}=\bF(R\cdot \frac{1}{g}).\]

The second part follows from the following commutative diagram
\[\xymatrix{
\bF(R\cdot \frac{1}{g^{p^e}_{j_1}\cdots g^{p^e}_{j_i}}) \ar[r] \ar[d] & \bF(R\cdot \frac{1}{g^{p^e}_{\ell_1}\cdots g^{p^e}_{\ell_{i+1}}}) \ar[d]\\
R\cdot \frac{1}{g^{p^{e+1}}_{j_1}\cdots g^{p^{e+1}}_{j_i}} \ar[r] & R\cdot \frac{1}{g^{p^{e+1}}_{\ell_1}\cdots g^{p^{e+1}}_{\ell_{i+1}}}
}\]
where the horizontal maps are induced by the $i$-th differential (\ref{equ: i-th diff in Cech}) in the \v{C}ech complex and the vertical maps are the isomorphisms in the first part applied to the cases when $g=g^{p^e}_{j_1}\cdots g^{p^e}_{j_i}$ and when $g=g^{p^e}_{\ell_1}\cdots g^{p^e}_{\ell_{i+1}}$, respectively.
\end{proof}

For the rest of this article, we will denote by $\theta$ the isomorphisms 
\[\bF^e(\check{C}^j(\underline{g};R))\xrightarrow{\sim} \check{C}^j(\underline{g};R),\quad \bF^e(\check{C}^j_e)\xrightarrow{\sim} \check{C}^j_{e+1}\quad {\rm and}\quad \bF^e(\check{C}^j_0)\xrightarrow{\sim} \check{C}^j_e.\]

The natural inclusion $R\cdot \frac{1}{g^{p^e}_{j_1}\cdots g^{p^e}_{j_i}}\to R\cdot \frac{1}{g^{p^{e+1}}_{j_1}\cdots g^{p^{e+1}}_{j_i}}$ induces a chain map between the truncated \v{C}ech complexes: $\check{C}^\bullet(\underline{g};R)_e\to \check{C}^\bullet(\underline{g};R)_{e+1}$ and hence induces an $R$-module homomorphism $H^i(\check{C}^\bullet(\underline{g};R)_e)\to H^i(\check{C}^\bullet(\underline{g};R)_{e+1})$. This produces a directed system:
\[H^i(\check{C}^\bullet(\underline{g};R)_0)\to H^i(\check{C}^\bullet(\underline{g};R)_1)\to \cdots \to H^i(\check{C}^\bullet(\underline{g};R)_e)\to \cdots\]
whose direct limit is isomorphic to $H^j_I(R)$.

Each element in $H^i_I(R)$ can be represented by a cohomological class of the form $[\cdots,\frac{r}{g^{n}_{j_1}\cdots g^{n}_{j_i}},\cdots]$. Let $H^j_I(R)_e$ be the $R$-submodule of $H^j_I(R)$ generated by classes $[\cdots,\frac{r}{g^{n}_{j_1}\cdots g^{n}_{j_i}},\cdots]$ with $n\leq p^e$. Then $H^j_I(R)_e$ is precisely the image of $H^i(\check{C}^\bullet(\underline{g};R)_e)$ in $H^j_I(R)$; consequently $H^i_I(R)_e$ is finitely generated. Furthermore, one can check that
\begin{equation}
\label{image of truncated Cech in LC}
H^i_I(R)_e\cong \frac{\ker(\delta^i_e)}{\im(\delta^{i-1})\cap \ker(\delta^i_e)}\ {\rm and\ } \bF(H^i_I(R)_e)\cong H^i_I(R)_{e+1}.
\end{equation}

For the rest of this article, whenever it is clear from the context, we will write $\check{C}^{\bullet}(\underline{g})$, or even $\check{C}^{\bullet}$, instead of $\check{C}^{\bullet}(\underline{g};R)$.

One can replace the \v{C}ech complex with its (Frobenius) truncations in Definition \ref{defn: Cech-Koszul double} to form the double complex 
\[{\bf D}_e:=D(K^{\bullet}(\underline{f^{p^e}});\check{C}^{\bullet}(\underline{g})_e)\] 
for each integer $e\geq 0$: 
\begin{equation}
\label{apply Frob to double with truncated}
\xymatrix{
& 0 & 0& 0 & & 0 \\
	0\ar[r] & R \ar[r]\ar[u] &{\bigoplus_{j} R\cdot\frac{1}{g^{p^e}_j}} \ar[r]\ar[u] &  {\bigoplus_{j_1<j_2} R\cdot\frac{1}{g^{p^e}_{j_1}g^{p^e}_{j_2}}} \ar[r]\ar[u] & \cdots \ar[r] & {R\cdot\frac{1}{g^{p^e}_{1}\cdots g^{p^e}_{s}}}\ar[r]\ar[u] & 0 \ar[l] \\
	0\ar[r] & R^{\oplus 2} \ar[r]\ar[u]^{{\tiny \begin{pmatrix}-f^{p^e}_2& f^{p^e}_1 \end{pmatrix}}} & {(\bigoplus_{j} R\cdot\frac{1}{g^{p^e}_j})^{\oplus 2}} \ar[r]\ar[u]^{{\tiny \begin{pmatrix}-f^{p^e}_2& f^{p^e}_1 \end{pmatrix}}} & {(\bigoplus_{j_1<j_2} R\cdot\frac{1}{g^{p^e}_{j_1}g^{p^e}_{j_2}})^{\oplus 2}} \ar[r]\ar[u]^{{\tiny \begin{pmatrix}-f^{p^e}_2& f^{p^e}_1 \end{pmatrix}}}  & \cdots \ar[r]  & {(R\cdot\frac{1}{g^{p^e}_{1}\cdots g^{p^e}_{s}})^{\oplus 2}} \ar[r]\ar[u]^{{\tiny \begin{pmatrix}-f^{p^e}_2& f^{p^e}_1 \end{pmatrix}}}  & 0 \\
	0\ar[r] & R  \ar[r]\ar[u]^{{\tiny \begin{pmatrix}f^{p^e}_1\\f^{p^e}_2 \end{pmatrix}}} & {\bigoplus_{j} R\cdot\frac{1}{g^{p^e}_j}} \ar[r]\ar[u]^{{\tiny \begin{pmatrix}f^{p^e}_1\\f^{p^e}_2 \end{pmatrix}}} & {\bigoplus_{j_1<j_2} R\cdot\frac{1}{g^{p^e}_{j_1}g^{p^e}_{j_2}}} \ar[r]\ar[u]^{{\tiny \begin{pmatrix}f^{p^e}_1\\f^{p^e}_2 \end{pmatrix}}} & \cdots\ar[r] & {R\cdot\frac{1}{g^{p^e}_{1}\cdots g^{p^e}_{s}}} \ar[r]\ar[u]^{{\tiny\begin{pmatrix}f^{p^e}_1\\f^{p^e}_2 \end{pmatrix}}} & 0 \\
	& 0\ar[u] & 0\ar[u]& 0\ar[u] & & 0\ar[u]
}
\end{equation}

A priori, one can form the double complex $D(K^{\bullet}(\underline{f^{p^{e}}});\check{C}^{\bullet}(\underline{g})_{e'})$ for two different integers $e$ and $e'$. Since this is not needed in this article, we opt not to explore it here.

We will denote the total complex of (\ref{apply Frob to double with truncated}) by $T^\bullet_e$. When taking the horizontal differentials (those in the truncated \v{C}ech complexes) and then the vertical differentials in (\ref{apply Frob to double with truncated}), one obtains a spectral sequence:
\begin{equation}
\label{spectral sequence truncated}
E^{i,j}_{2,e}:=H^i(K^{\bullet}(\underline{f}; H^j(\check{C}_0)\Rightarrow H^{i+j}(T^\bullet_e)
\end{equation}
We will denote the differentials in (\ref{spectral sequence truncated}) by
\[\varphi^{i,j}_{2,e}:E^{i,j}_{2,e}\to E^{i+2,j-1}_{2,e}.\]

Since $\bF$ is an exact functor, one can check $\bF^e(K^{\bullet}(\underline{f};R))\cong K^{\bullet}(\underline{f}^{p^p};R)$ for any sequence $\underline{f}$ of elements in $R$. On the other hand, according to Proposition \ref{apply Frob to truncated Cech} that $\bF^e(\check{C}^{\bullet}(\underline{g})_0)\cong \check{C}^{\bullet}(\underline{g})_e$ for any sequence $\underline{g}$ of elements in $R$. Consequently, the double complex ${\bf D}_e$ can be obtained by applying $\bF^e$ to ${\bf D}_0$.

According to Theorem \ref{closedness in one spectral implies the other}, it suffices to analyze the double complex ${\bf D}$. One of our motivations to introduce the double complexes ${\bf D}_e$ is that a great deal of information of ${\bf D}$ is already encoded in ${\bf D}_0$ in which every module is finitely generated. As shown in the sequel, one can link ${\bf D}_0$ with ${\bf D}$ using the Peskine-Szpiro functor $\bF$. This link is rather intricate since ${\bf D}_0$ is directly linked with ${\bf D}_e$ via $\bF^e$  (the differentials in the Koszul (co)complex in ${\bf D}_e$ come from the elements $f^{p^e}_1,f^{p^e}_2$, not $f_1,f_2$).  
\section{Koszul cohomology of $F$-finite $F$-modules}
\label{Koszul of F-modules}

Let $R$ be a noetherian \emph{regular} ring of prime characteristic $p>0$. In this section, we will investigate $E^{i,j}_2$ in the $E^{\bullet,\bullet}_2$-page coming from the double complex ${\bf D}$ has Zariski-closed support; that is the Koszul cohomology $H^i(K^\bullet(\underline{f};H^j_I(R)))$. Instead of local cohomology modules $H^j_I(R)$, we will consider all $F$-finite $F$-modules. To this end, we begin by recalling the definition and basic facts of $F$-modules ({\it cf.} \cite{LyubeznikFModule}).

\begin{enumerate}
\item An $R$-module $\scr{M}$ is an \emph{$F$-module} if there is an $R$-module isomorphism
\[\theta: \scr{M}\to \bF(\scr{M})=R^{(1)}\otimes_R\scr{M}\]
called the structure isomorphism.

\item If $(\scr{M},\theta_\scr{M})$ and $(\scr{N},\theta_\scr{N})$ are $F$-modules, then an \emph{$F$-module morphism} from $(\scr{M},\theta_\scr{M})$ to $(\scr{N},\theta_\scr{N})$ consists of the the following commutative diagram:
\[\xymatrix{
\scr{M} \ar[r]^{\varphi} \ar[d]^{\theta_\scr{M}} & \scr{N}\ar[d]^{\theta_\scr{N}}\\
R^{(1)}\otimes_R\scr{M} \ar[r]^{{\bf 1}\otimes \varphi} & R^{(1)}\otimes_R\scr{N}
}\]
We will simply write this $F$-module morphism as $\varphi:\scr{M}\to \scr{N}$ whenever the context is clear.

\item A \emph{generating morphism} of an $F$-module is an $R$-module homomorphism $\beta: M\to \bF(M)$, where $M$ is an $R$-module, such that $\scr{M}$ is the direct limit of the top row of the following commutative diagram
\[\xymatrix{
M \ar[r]^{\beta} \ar[d]^{\beta} & \bF(M) \ar[r]^{\bF(\beta)} \ar[d]^{\bF(\beta)} & \bF^{2}(M) \ar[r] \ar[d] &\cdots\\
\bF(M) \ar[r]^{\bF(\beta)} & \bF^{2}(M) \ar[r]^{\bF^{2}(\beta)} & \bF^{3}(M)\ar[r] & \cdots
}\]
and the structure isomorphism $\theta:\scr{M}\to \bF(\scr{M})$ is induced by the vertical morphism in the diagram.

\item An $F$-module $\scr{M}$ is \emph{$F$-finite} if it admits a generating morphism $\beta:M\to \bF(M)$ where $M$ is a finitely generated $R$-module.
\item Each $F$-finite $F$-module $\scr{M}$ admits an injective generating morphism $\beta:M\hookrightarrow \bF(M)$ where $M$ is a finitely generated $R$-module; $(M,\beta)$ is called a root of $\scr{M}$.
\item For each $f\in R$, the localization $R_f$ is an $F$-finite $F$-module. 
\item Given elements $g_1,\dots,g_s\in R$, the \v{C}ech complex $\check{C}^\bullet(\underline{g};R)$ is a complex in the category of $F$-finite $F$-modules; that is, each module $\check{C}^j$ is an $F$-finite $F$-module and the differentials $\delta^j$ in this complex are $F$-module morphisms.
\item $\ker(\delta^j)$ and $\im(\delta^j)$ are $F$-finite $F$-modules and consequently $H^j_I(R)$ is an $F$-finite $F$-module for each integer $j$ and each ideal $I$ in $R$.
\end{enumerate}

Let $\scr{M}$ be an $F$-finite $F$-module and $\beta:M\hookrightarrow \bF(M)$ is a root. Let $R^{b}\xrightarrow{A} R^a\to M\to 0$ be a presentation of $M$ where $A$ is an $a\times b$ matrix whose entries are elements of $R$. Then we have the following commutative diagram:
\[
\xymatrix{
R^b \ar[r]^{A} \ar[d] & R^a \ar[r] \ar[d]^{U} &M \ar[r] \ar[d]^{\beta} & 0\\
R^b \ar[r]^{A^{[p]}} & R^a \ar[r] & F(M)\ar[r] &0
}
\]
where $A^{[p]}$ denotes the matrix whose entries are the $p$-th powers of the corresponding entries in $A$ and $U$ is an $a\times a$ matrix with entries in $R$. To ease notation, we will denote this diagram by 
\[\coker(A)\xrightarrow{U}\coker(A^{[p]}).\]

Let $f_1,\dots,f_c$ be a sequence of elements in $R$ and let $H^i(\underline{f};-)$ denote the $i$-th Koszul cohomology functor. That is, 
\[H^c(\underline{f};N)\cong N/(\underline{f}N)\quad {\rm and}\quad H^0(\underline{f};N)\cong \bigcap_{j=1}^t\ker(N\xrightarrow{f_j}N)\]
for each $R$-module $N$.

\begin{theorem}
\label{Koszul two ends closed}
For each $F$-finite $F$-module $\scr{M}$, we have that $\Supp(H^c(\underline{f};\scr{M}))$ and $\Supp(H^0(\underline{f};\scr{M}))$ are Zariski-closed, where $\underline{f}=\{f_1,\dots,f_c\}$ is an arbitrary sequence of elements in $R$.  
\end{theorem}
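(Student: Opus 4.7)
My plan is to exploit a root of $\scr{M}$: pick an injective generating morphism $\beta:M\hookrightarrow\bF(M)$ with $M$ finitely generated, which realizes $\scr{M}$ as a directed union $\bigcup_{e\ge 0}M_e$, where $M_e:=\bF^e(M)$ is finitely generated and the transition maps $M_e\hookrightarrow M_{e+1}$ are injective by Kunz flatness of Frobenius. Both $(0:_\scr{M}\underline{f})=H^0(\underline{f};\scr{M})$ and $N:=\scr{M}/(\underline{f})\scr{M}=H^c(\underline{f};\scr{M})$ will then appear as directed unions of finitely generated $R$-submodules; the goal is to use the exactness of $\bF$ together with the identity $\Supp(\bF(-))=\Supp(-)$ over the regular ring $R$ to show that the ascending chain of supports is already constant, so that the union of supports equals the (closed) support of the $e=0$ term.

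For $H^0$, injectivity of $M_e\hookrightarrow\scr{M}$ gives $(0:_\scr{M}\underline{f})=\bigcup_e(0:_{M_e}\underline{f})$. The key computation is that on $\bF(M_e)=M_{e+1}$ the functor $\bF$ carries multiplication by $f_i$ to multiplication by $f_i^p$ (using $r\otimes f_i m=f_i^p r\otimes m$ in $R^{(1)}\otimes_R M_e$), so applying $\bF$ to the defining exact sequence $0\to(0:_{M_e}\underline{f})\to M_e\xrightarrow{(f_1,\ldots,f_c)}M_e^{\oplus c}$ produces $\bF((0:_{M_e}\underline{f}))=(0:_{M_{e+1}}\underline{f^p})$. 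Since $(0:_{M_{e+1}}\underline{f})\subseteq(0:_{M_{e+1}}\underline{f^p})$ and $\bF$ preserves support, one gets $\Supp((0:_{M_{e+1}}\underline{f}))\subseteq\Supp((0:_{M_e}\underline{f}))$; the reverse containment is immediate from the inclusion $(0:_{M_e}\underline{f})\hookrightarrow(0:_{M_{e+1}}\underline{f})$ induced by $\bF^e(\beta)$. Hence all these supports equal $\Supp((0:_M\underline{f}))$, which is closed.

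For $H^c$, let $\bar M_e$ denote the image of $M_e$ in $N$, so $N=\bigcup_e\bar M_e$. The structure iso $\theta:\scr{M}\xrightarrow{\sim}\bF(\scr{M})$ together with $\bF((\underline{f})\scr{M})=(\underline{f})^{[p]}\bF(\scr{M})$ identifies $\bF(N)\cong\scr{M}/(\underline{f})^{[p]}\scr{M}$, and the inclusion $(\underline{f})^{[p]}\subseteq(\underline{f})$ produces a surjection $\pi:\bF(N)\twoheadrightarrow N$. Since $\bF$ preserves intersections of submodules (flatness), $\bF(\bar M_e)$ is identified with the image of $M_{e+1}$ in $\scr{M}/(\underline{f})^{[p]}\scr{M}$, and $\pi$ carries this onto the image of $M_{e+1}$ in $N$, namely $\bar M_{e+1}$. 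Since $\Supp(\bF(\bar M_e))=\Supp(\bar M_e)$, the surjection $\bF(\bar M_e)\twoheadrightarrow\bar M_{e+1}$ forces $\Supp(\bar M_{e+1})\subseteq\Supp(\bar M_e)$; coupled with $\bar M_e\subseteq\bar M_{e+1}$, the supports are constant, and $\Supp(N)=\Supp(\bar M_0)$ is closed.

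The main obstacle I anticipate is the $H^c$ case: unlike $(0:_\scr{M}\underline{f})$, the quotient $N=\scr{M}/(\underline{f})\scr{M}$ is not itself an $F$-module, so there is no structure isomorphism on $N$ to leverage directly; the argument instead routes through the weaker Cartier-type surjection $\bF(N)\twoheadrightarrow N$ arising from $(\underline{f})^{[p]}\subseteq(\underline{f})$, and one must verify carefully that this surjection carries $\bF(\bar M_e)$ precisely onto $\bar M_{e+1}$ at the level of the finitely generated approximants. Once that identification is in place the rest follows formally from $\bF$ preserving support.
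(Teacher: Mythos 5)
Your argument is correct, and both you and the paper ultimately rest on the same engine --- Frobenius descent showing that the support of the $e$-th finitely generated approximant is independent of $e$ --- but the execution is genuinely different. The paper works with a presentation $R^b\xrightarrow{A}R^a\to M\to 0$ of the root and the induced matrices $U, U^{[p]},\dots$, quoting the proof of Theorem~7.1 of Katzman--Zhang for the $H^0$ case and, for the $H^c$ case, localizing at a prime $\frakp$ and computing with colon modules of the form $\bigl(\im((f_1,\dots,f_c))+\im(A^{[p^{j+1}]})\bigr):_{R^a}(U^{[p^j]}\cdots U)$, using the inclusion of a Frobenius power of such a colon module into the next one to reduce to $e=0$. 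You instead work directly with the root filtration $\scr{M}=\bigcup_e M_e$, $M_e=\bF^e(M)$, and use only the exactness of $\bF$ over the regular ring $R$ together with $\Supp(\bF(-))=\Supp(-)$: for $H^0$ the identity $\bF\bigl((0:_{M_e}\underline{f})\bigr)=(0:_{M_{e+1}}\underline{f}^{[p]})\supseteq(0:_{M_{e+1}}\underline{f})$ forces the chain of supports to be simultaneously ascending and descending, hence constant; for $H^c$ the surjection $\bF(N)\cong\scr{M}/(\underline{f})^{[p]}\scr{M}\twoheadrightarrow N$ coming from $(\underline{f})^{[p]}\subseteq(\underline{f})$ carries $\bF(\bar M_e)$ onto $\bar M_{e+1}$ (this identification is the one delicate point, and your justification via $\theta^{-1}(\bF(M_e))=M_{e+1}$ and exactness of $\bF$ on images is sound), again pinning all the supports to $\Supp(\bar M_0)$. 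Your version buys a cleaner, presentation-free proof that yields the support already at the $e=0$ stage with no ``$j\gg 0$'' stabilization, at the cost of leaning on the existence of a root and the canonical identification of $\bF(M_e)$ with $M_{e+1}$ inside $\bF(\scr{M})$; the paper's version is more computational but produces explicit closed-form descriptions of the supports in terms of the matrices $A$ and $U$, which is what its later sections reuse.
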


Before we proceed to the proof, we remark that the special case of Theorem \ref{Koszul two ends closed} when $c=1$ and $\scr{M}=H^j_I(R)$ recovers \cite[Theorem~1.1]{HochsterNB} and \cite[Theorem~7.1(c)]{KatzmanZhang}.

\begin{proof}[Proof of Theorem \ref{Koszul two ends closed}]

To treat the $0$-th Koszul cohomology, we consider the following diagram:
\begin{equation}
\label{diagram t-th Koszul}
\xymatrix@R+2pc@C+2pc{
\coker(A) \ar[r]^{U} \ar[d]_{\begin{pmatrix}f_1\\ \vdots \\f_t \end{pmatrix}} & \coker(A^{[p]})\ar[r]^{U^{[p]}} \ar[d]_{\begin{pmatrix}f_1\\ \vdots \\f_c \end{pmatrix}} & \coker(A^{[p^2]}) \ar[r]^-{U^{[p^2]}} \ar[d]_{\begin{pmatrix}f_1\\ \vdots \\f_c \end{pmatrix}} &\cdots\\
\coker(A)^{\oplus c} \ar[r]^{U^{\oplus c}} & \coker(A^{[p]})^{\oplus c} \ar[r]^{(U^{[p]})^{\oplus c}}& \coker(A^{[p^2]})^{\oplus c}\ar[r]^-{(U^{[p^2]})^{\oplus c}}  &\cdot
}
\end{equation}
Each square in this commutative diagram 
\[\xymatrix@R+2pc@C+2pc{
\coker(A^{[p^e]})\ar[r]^{U^{[p^e]} }\ar[d]_{\begin{pmatrix}f_1\\ \vdots \\f_c \end{pmatrix}} & \coker(A^{[p^{e+1}]})  \ar[d]_{\begin{pmatrix}f_1\\ \vdots \\f_c \end{pmatrix}} \\
\coker(A^{[p^e]})^{\oplus c} \ar[r]^{U^{[p^e]}}& \coker(A^{[p^{e+1}]})^{\oplus c}
}\]
commutes since $U^{[p^e]}f_j=f_jU^{[p^e]}$ for each $f_j$. Therefore (\ref{diagram t-th Koszul}) is a commutative diagram. One can check that the direct limit of (\ref{diagram t-th Koszul}) is 
\[\scr{M} \xrightarrow{\begin{pmatrix}f_1\\ \vdots \\f_c \end{pmatrix}}\scr{M}^{\oplus c}.\]

It follows from the proof of \cite[Theorem~7.1]{KatzmanZhang} that 
\[\Supp(\ker(\scr{M}\xrightarrow{f_j}\scr{M}))=\Supp(\frac{(\ker(U^{[p^j]}\cdots U)):_{R^a}f_j}{\ker(U^{[p^j]}\cdots U)}),\ j\gg 0.\]
Consequently
\[\Supp(H^0(\underline{f};\scr{M}))=\Supp(\frac{(\ker(U^{[p^j]}\cdots U)):_{R^a}(f_1,\dots,f_c)}{\ker(U^{[p^j]}\cdots U)}),\ j\gg 0\]
which is Zariski-closed.

To handle the $t$-th Koszul cohomology, we consider the following diagram:
\begin{equation}
\label{diagram 0-th Koszul}
\xymatrix@R+2pc@C+2pc{
\coker(A)^{\oplus c} \ar[r]^{U^{\oplus c}} \ar[d]_{(f_1,\cdots, f_c)} & \coker(A^{[p]})^{\oplus c}\ar[r]^{(U^{[p]})^{\oplus c}} \ar[d]_{(f_1,\cdots, f_c)} & \coker(A^{[p^2]})^{\oplus c} \ar[r]^-{(U^{[p^2]})^{\oplus c}} \ar[d]_{(f_1,\cdots, f_c)} &\cdots\\
\coker(A) \ar[r]^{U} & \coker(A^{[p]}) \ar[r]^{U^{[p]}}& \coker(A^{[p^2]})\ar[r]^-{U^{[p^2]}}  &\cdot
}
\end{equation}

Each square in this commutative diagram 
\[\xymatrix{
\coker(A^{[p^e]})^{\oplus t}\ar[r]^{(U^{[p^e]})^{\oplus t}} \ar[d]_{(f_1,\cdots, f_c)} & \coker(A^{[p^{e+1}]})^{\oplus c}  \ar[d]_{(f_1,\cdots, f_c)} \\
\coker(A^{[p^e]}) \ar[r]^{U^{[p^e]}}& \coker(A^{[p^{e+1}]})
}\]
commutes since $U^{[p^e]}f_j=f_jU^{[p^e]}$ for each $f_j$. Therefore (\ref{diagram 0-th Koszul}) is a commutative diagram. One can check that the direct limit of (\ref{diagram 0-th Koszul}) is 
\[\scr{M}^{\oplus t} \xrightarrow{(f_1,\cdots, f_c)}\scr{M}.\]
Each element in $\scr{M}$ can be represented by an element $z\in \coker(A^{[p^e]})$ for some $e$. Let $\frakp$ be a prime ideal of $R$. This element becomes $0$ in $(H^c(\underline{f};\scr{M})_{\frakp}$ if and only if there is an integer $j$ such that 
\[(U^{[p^{e+j}]}\cdots U^{[p^e]})z\in \Big(\im((f_1,\dots,f_c))+\im(A^{[p^{e+j+1}]}) \Big).\]
Therefore, 
\[(H^c(\underline{f};\scr{M})_{\frakp}=0\Leftrightarrow \bigcup_j \Big((\im((f_1,\dots,f_c))+\im(A^{[p^{e+j+1}]})):_{R^a} (U^{[p^{e+j}]}\cdots U^{[p^e]})\Big)_{\frakp} =R^a_{\frakp},\ \forall\ e.\]
Since 
\begin{align*}
 &\quad \Big((\im((f_1,\dots,f_c))+\im(A^{[p^{e+j+1}]})):_{R^a} (U^{[p^{e+j}]}\cdots U^{[p^e]})\Big)^{[p]}\\
 &= (\im((f^p_1,\dots,f^p_c))+\im(A^{[p^{e+j+2}]})):_{R^a} (U^{[p^{e+j+1}]}\cdots U^{[p^{e+1}]})\\
 &\subseteq (\im((f_1,\dots,f_c))+\im(A^{[p^{e+j+2}]})):_{R^a} (U^{[p^{e+j+1}]}\cdots U^{[p^{e+1}]}),
\end{align*}
one can check that 
\[(H^t(\underline{f};\scr{M})_{\frakp}=0\Leftrightarrow \bigcup_j \Big((\im((f_1,\dots,f_c))+\im(A^{[p^{e+j+1}]})):_{R^a} (U^{[p^{e+j}]}\cdots U^{[p^e]})\Big)_{\frakp} =R^a_{\frakp}\]
if and only if 
\[(H^t(\underline{f};\scr{M})_{\frakp}=0\Leftrightarrow \bigcup_j \Big((\im((f_1,\dots,f_c))+\im(A^{[p^{j+1}]})):_{R^a} (U^{[p^{j}]}\cdots U)\Big)_{\frakp} =R^a_{\frakp}\ {\rm (that\ is\ when\ }e=0).\]
This proves that
\[\Supp(H^t(\underline{f};\scr{M}))= \Supp\Big( \frac{R^a}{(\im((f_1,\dots,f_c))+\im(A^{[p^{j+1}]})):_{R^a} (U^{[p^{j}]}\cdots U)}\Big)\]
which is clearly Zariski-closed.
\end{proof}

The most relevant case to this article is when $\underline{f}$ is a regular sequence in $R$. We pose the following question:
\begin{question}
\label{ques: koszul of F-finite F-modules}
Let $R$ be a noetherian regular ring of primes characteristic $p$ and $\underline{f}$ be a regular sequence in $R$. Is it true that $\Supp(H^i(K^\bullet(\underline{f};\scr{M}))$ is Zariski-closed for each integer $i$ and each $F$-finite $F$-module $\scr{M}$?
\end{question}


To the best our knowledge, Question \ref{ques: koszul of F-finite F-modules} is open as stated. In the next section, we will show that it has an affirmative answer when $\underline{f}=f_1,f_2$.

\section{Regular sequences of length $2$}
\label{1st Koszul length 2}

In this section we consider the case when $t=2$; that is, when $R$ is an $F$-finite noetherian regular ring of prime characteristic, $f_1,f_2$ form a regular seqeunce in $R$ and $\scr{M}$ is an $F$-finite $F$-module. The main goal in this section is to prove the following result:
\begin{theorem}
\label{1st Koszul with two elements}
$\Supp(H^1(K^\bullet(f_1,f_2;\scr{M})))$ is Zariski-closed for every $F$-finite $F$-module $\scr{M}$ and arbitrary elements $f_1,f_2$ in $R$.
\end{theorem}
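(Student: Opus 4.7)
The plan is to use the iterated-Koszul short exact sequence to reduce $H^1(K^\bullet(f_1,f_2;\scr{M}))$ to Koszul cohomology in degrees $0$ and $c$ (where $c=1$, which is handled by Theorem~\ref{Koszul two ends closed}), and then to push through a Frobenius-stability argument on a root of $\scr{M}$ for each of the two resulting outer terms.

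First, realize $K^\bullet(f_1, f_2; \scr{M})$ as the total complex of the $2 \times 2$ double complex $K^\bullet(f_1;R) \otimes_R K^\bullet(f_2; \scr{M})$. Taking cohomology in the $f_2$-direction first and then in the $f_1$-direction produces a spectral sequence whose $E_2$-page is concentrated in a $2 \times 2$ grid; every higher differential $d_r$ ($r \ge 2$) has zero target, so $E_\infty = E_2$. The induced filtration on the degree-$1$ total cohomology is a short exact sequence
\[
0 \longrightarrow \frac{\ker(f_2;\scr{M})}{f_1 \ker(f_2;\scr{M})} \longrightarrow H^1(K^\bullet(f_1,f_2;\scr{M})) \longrightarrow \ker\!\bigl(f_1 : \scr{M}/f_2\scr{M} \to \scr{M}/f_2\scr{M}\bigr) \longrightarrow 0.
\]
Since the support of the middle term of a short exact sequence equals the union of the supports of the two outer terms, it suffices to prove that both outer terms have Zariski-closed support.

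The outer terms are $H^1(K^\bullet(f_1; N^-))$ and $H^0(K^\bullet(f_1; N^+))$, where $N^- := \ker(f_2;\scr{M}) = H^0(K^\bullet(f_2;\scr{M}))$ and $N^+ := \scr{M}/f_2\scr{M} = H^1(K^\bullet(f_2;\scr{M}))$. By Theorem~\ref{Koszul two ends closed} applied to the $F$-finite $F$-module $\scr{M}$ and the single element $f_2$, both $N^-$ and $N^+$ already have Zariski-closed support. They are, however, typically \emph{not} themselves $F$-finite $F$-modules, since multiplication by $f_2 \in R$ is not an $F$-module endomorphism of $\scr{M}$: a short calculation shows that $F(f_2 \cdot)$ acts on $F(\scr{M})$ as multiplication by $f_2^p$, not by $f_2$. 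Hence Theorem~\ref{Koszul two ends closed} cannot be invoked a second time in a naive way. Instead, I would work directly at the root level: fix a root $(M,\beta)$ of $\scr{M}$ with presentation $R^b \xrightarrow{A} R^a \to M \to 0$ lifted by an $a \times a$ matrix $U$, so that $\scr{M} = \varinjlim M_e$ with $M_e = \coker(A^{[p^e]})$ and transitions $U^{[p^e]}$. Each of the two outer terms admits a description as a direct limit of finitely generated $R$-subquotients of $M_e$ via iterated colon-module computations, and one aims to define, for each $e$ and each $j \ge 0$, a submodule $J_j^e \subseteq R^a$ capturing when a lifted element is ``$(f_1,f_2)$-trivial at level $e+j$''. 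The target is to verify (i) the ascending chain property $J_j^e \subseteq J_{j+1}^e$ (so that $J^e := \bigcup_j J_j^e$ stabilizes by Noetherianness of $R^a$) and (ii) the Frobenius-compatibility inclusion $(J_j^e)^{[p]} \subseteq J_j^{e+1}$ (so that the closed sets $V(J^e) \subseteq \Spec R$ form a descending chain, stabilizing by Noetherianness of $\Spec R$). The common stable closed subset is then the support of the corresponding outer term.

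The main obstacle is this last step. Because $N^-$ and $N^+$ are not themselves $F$-finite $F$-modules, the argument of Theorem~\ref{Koszul two ends closed} does not transfer verbatim: at every truncation level the scalars $f_1,f_2$ remain fixed and are not raised to $p^e$-th powers, whereas the root matrices $A$ and $U$ are. Carefully arranging the submodules $J_j^e$ so that both of the inclusions (i) and (ii) hold, for this iterated Koszul cohomology, is the technical heart of the proof, and is where the restriction to $c=2$ becomes essential --- Question~\ref{ques: koszul of F-finite F-modules} is open for $c \ge 3$.
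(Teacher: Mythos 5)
Your reduction to the short exact sequence
\[
0 \longrightarrow \frac{(0:_{\scr{M}}f_2)}{f_1(0:_{\scr{M}}f_2)} \longrightarrow H^1(K^\bullet(f_1,f_2;\scr{M})) \longrightarrow \frac{f_2\scr{M}:_{\scr{M}}f_1}{f_2\scr{M}} \longrightarrow 0
\]
is correct, and you have correctly diagnosed the obstacle: the outer terms are not $F$-finite $F$-modules, so Theorem \ref{Koszul two ends closed} does not apply to them. But your proof stops exactly where the real work begins. You posit submodules $J^e_j\subseteq R^a$ satisfying an ascending-chain condition in $j$ and a Frobenius-compatibility condition in $e$, yet you never exhibit them, and for a general $\scr{M}$ the natural candidates fail. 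Concretely, for the term $(f_2\scr{M}:_{\scr{M}}f_1)/f_2\scr{M}$: writing $\scr{M}=\bigcup_e L_e$ with $L_e=\bF^e(L)$ for a root $L$, the Frobenius-compatibility step starts from vanishing at level $0$ and applies $\bF^e$ to learn that $(f_2^{p^e}\scr{M}\cap L_e:_{L_e}f_1^{p^e})=f_2^{p^e}\scr{M}\cap L_e$. Given $z\in L_e$ with $f_1z=f_2w$, multiplying by $f_1^{p^e-1}f_2^{p^e-1}$ places $f_2^{p^e-1}z$ in $f_2^{p^e}\scr{M}\cap L_e$, hence $f_2^{p^e-1}(z-f_2v)=0$ for some $v\in\scr{M}$; to conclude $z\in f_2\scr{M}$ you must kill the $f_2$-torsion element $z-f_2v$, and there is no way to do this for arbitrary $\scr{M}$. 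This is precisely where your conditions (i) and (ii) cannot both be arranged without further input.

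The paper circumvents this with a different first step: it splits $\scr{M}$ along $\Gamma:=\Gamma_{(f_1,f_2)}(\scr{M})$, which is again an $F$-finite $F$-module, and uses the exact sequence $0\to H^1(K^\bullet(\underline f;\Gamma))\to H^1(K^\bullet(\underline f;\scr{M}))\to H^1(K^\bullet(\underline f;\scr{M}/\Gamma))\xrightarrow{\delta}H^2(K^\bullet(\underline f;\Gamma))$ to reduce to two pieces: (a) the torsion module $\Gamma$, where exactly your decomposition is deployed and the Frobenius descent succeeds by a descending induction on the torsion order (Theorem \ref{torsion case of length 2}, Claims 2 and 3 --- this is where the hypothesis $f_2^{jp^e}z=0$ rescues the step that fails above); and (b) $\Supp(\ker\delta)$, which Lemma \ref{kernel of delta} computes in terms of the colon modules $\bigcup_{j}(f_1^{j+1}\scr{M}:_{\scr{M}}f_1^j)$ --- precisely the gadget that absorbs the residual $f_1$- and $f_2$-torsion obstruction. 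Without this torsion splitting, or an equivalent device, your outline does not close.
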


Before we can prove Theorem \ref{1st Koszul with two elements}, we would like to consider a special case of it:
\begin{theorem}
\label{torsion case of length 2}
Assume that an $F$-finite $F$-module $\scr{M}$ is $(f_1,f_2)$-torsion. Then $\Supp(H^1(K^\bullet(f_1,f_2;\scr{M})))$ is Zariski-closed.
\end{theorem}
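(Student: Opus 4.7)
The plan is to use a root presentation of $\scr{M}$ to realize $H^1(K^\bullet(f_1,f_2;\scr{M}))$ as an ascending union of finitely generated $R$-submodules, and then leverage the Frobenius functor $\bF$ together with the $(f_1,f_2)$-torsion hypothesis to show that the corresponding ascending chain of Zariski-closed supports stabilizes.

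First I would fix an injective generating morphism $\beta\colon M\hookrightarrow \bF(M)$ of $\scr{M}$ with $M$ a finitely generated $R$-module. Because $M\hookrightarrow \scr{M}$ and $\scr{M}$ is $(f_1,f_2)$-torsion, the finitely generated module $M$ is annihilated by some power $(f_1,f_2)^N$. A short computation with the right $R$-action on $R^{(e)}$ then shows $\bF^e(M)$ is annihilated by $f_1^{Np^e}$ and $f_2^{Np^e}$ for every $e\geq 0$. Since $R$ is regular, $\bF$ is exact and the transition maps $\bF^e(\beta)$ are injective, so $\scr{M}=\varinjlim_e\bF^e(M)$. Since Koszul cohomology commutes with direct limits,
\[H^1(K^\bullet(f_1,f_2;\scr{M}))=\varinjlim_e H^1(K^\bullet(f_1,f_2;\bF^e(M))).\]
Letting $I_e$ denote the image of $H^1(K^\bullet(f_1,f_2;\bF^e(M)))$ inside $H^1(K^\bullet(f_1,f_2;\scr{M}))$, each $I_e$ is a quotient of a finitely generated Koszul cohomology, hence a finitely generated $R$-module with Zariski-closed support $\Supp(I_e)=V(\mathrm{Ann}_R(I_e))$, and the $I_e$ form an ascending chain whose union equals $H^1(K^\bullet(f_1,f_2;\scr{M}))$.

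The heart of the proof, and the step I expect to be the main obstacle, is showing that the ascending chain $\Supp(I_0)\subseteq\Supp(I_1)\subseteq\cdots$ stabilizes; note that ascending chains of closed subsets of $\Spec R$ need not stabilize in general, so this requires a genuine argument using the structure at hand. The plan is to combine three ingredients: (i) the identity $\bF(K^\bullet(f_1,f_2;\bF^e(M)))\cong K^\bullet(f_1^p,f_2^p;\bF^{e+1}(M))$, which gives $\bF(H^1(K^\bullet(f_1,f_2;\bF^e(M))))\cong H^1(K^\bullet(f_1^p,f_2^p;\bF^{e+1}(M)))$; (ii) Kunz's theorem, by which $\Supp(\bF(N))=\Supp(N)$ for every finitely generated $R$-module $N$; and (iii) the natural comparison chain map $K^\bullet(f_1,f_2;N)\to K^\bullet(f_1^p,f_2^p;N)$ given by multiplication by $f_i^{p-1}$ in the appropriate Koszul degree, which produces a canonical map between $I_{e+1}$ and $\bF(I_e)$ after the identification in (i). The $(f_1,f_2)$-torsion hypothesis enters crucially here: because $\bF^{e+1}(M)$ is annihilated by $f_i^{Np^{e+1}}$, the discrepancy between the two Koszul complexes (governed by multiplication by $f_i^{p-1}$) becomes inconsequential at the level of supports. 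A careful bookkeeping argument of the sort appearing in the proof of Theorem~\ref{Koszul two ends closed} should then yield $\Supp(I_{e+1})=\Supp(\bF(I_e))=\Supp(I_e)$ for all sufficiently large $e$.

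Once stabilization is established at some index $e_0$, we conclude
\[\Supp(H^1(K^\bullet(f_1,f_2;\scr{M})))=\bigcup_e\Supp(I_e)=\Supp(I_{e_0}),\]
which is Zariski-closed, completing the proof. The role of the torsion hypothesis is precisely to bound the "growth" of supports across Frobenius applications so that the ascending chain eventually becomes constant; without this hypothesis, the comparison between multiplication by $f_i^{p-1}$ and the Frobenius twist cannot be controlled at the level of supports.
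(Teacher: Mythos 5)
Your proposal correctly reduces the problem to showing that the ascending chain of closed sets $\Supp(I_0)\subseteq\Supp(I_1)\subseteq\cdots$ stabilizes, but that is exactly where it stops being a proof: the key claim $\Supp(I_{e+1})=\Supp(\bF(I_e))=\Supp(I_e)$ for $e\gg 0$ is asserted to follow from ``careful bookkeeping,'' and none of the three ingredients you list actually delivers it. The transition maps in your direct system $H^1(K^\bullet(f_1,f_2;\bF^e(M)))\to H^1(K^\bullet(f_1,f_2;\bF^{e+1}(M)))$ are induced by $\bF^e(\beta)$ with the elements $f_1,f_2$ unchanged, whereas applying $\bF$ to the $e$-th complex produces $K^\bullet(f_1^p,f_2^p;\bF^{e+1}(M))$; the only natural comparison between these two complexes over $\bF^{e+1}(M)$ is the chain map given by multiplication by $f_i^{p-1}$ in Koszul degree $1$ (and by $f_1^{p-1}f_2^{p-1}$ in degree $2$). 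Under the $(f_1,f_2)$-torsion hypothesis these multiplications are very far from injective on $H^1$, so neither the kernel nor the cokernel of the induced comparison map is controlled, and no relation between $\Supp(I_{e+1})$ and $\Supp(\bF(I_e))$ follows. Note also that if the sketched stabilization worked as described, the same argument would apply verbatim to any middle Koszul cohomology of any $F$-finite $F$-module on a sequence of any length, which would settle Question \ref{ques: koszul of F-finite F-modules}; this is a strong indication that an essential idea is missing.

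The paper's proof proceeds quite differently. It uses the long exact sequence relating $K^\bullet(f_1,f_2;\scr{M})$ to $K^\bullet(f_1;\scr{M})$ (and, by symmetry, to $K^\bullet(f_2;\scr{M})$) to write $\Supp(H^1(K^\bullet(f_1,f_2;\scr{M})))$ as the union of $\Supp\bigl(\tfrac{f_1\scr{M}:_{\scr{M}}f_2}{f_1\scr{M}}\bigr)$, $\Supp\bigl(\tfrac{(0:_{\scr{M}}f_1)}{f_2(0:_{\scr{M}}f_1)}\bigr)$, and $\Supp\bigl(\tfrac{(0:_{\scr{M}}f_2)}{f_1(0:_{\scr{M}}f_2)}\bigr)$, and then shows by explicit element-chasing (Claims 1--3 of the paper) that each of these supports equals the support of a subquotient of the finitely generated root $L$ (or of $L_{e_0}$ for one fixed $e_0$). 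The torsion hypothesis is used there in a very concrete way: for instance, in Claim 3 one must manufacture, from $z\in(0:_{\scr{M}}f_1)$, an element $x=f_1^{p^{e_0+e}-1}f_2^{p^{e_0+e}-1}w$ satisfying $f_1x=0$ and $f_2x=z$. Nothing playing the role of these constructions appears in your sketch, so as written the proposal identifies the right difficulty but does not overcome it; you would need to supply a genuine argument for the stabilization, or switch to a decomposition of $H^1$ along the lines of the paper.
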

\begin{proof}
It follows the following long exact sequence of Koszul cohomology

\begin{align*}
0&\leftarrow H^2(K^\bullet(f_1,f_2;\scr{M}))\leftarrow H^1(K^\bullet(f_1;\scr{M}))\xleftarrow{f_2}H^1(K^\bullet(f_1;\scr{M}))\\
&\leftarrow H^1(K^\bullet(f_1,f_2;\scr{M}))\leftarrow H^0(K^\bullet(f_1;\scr{M}))\xrightarrow{f_2}H^0(K^\bullet(f_1;\scr{M}))\leftarrow H^0(K^\bullet(f_1,f_2;\scr{M}))\leftarrow 0.
\end{align*}
that
{\small
\begin{align*}
&\quad \Supp(H^1(K^\bullet(f_1,f_2;\scr{M})))\\
&=\Supp(\coker(H^0(K^\bullet(f_1;\scr{M}))\xrightarrow{f_2}H^0(K^\bullet(f_1;\scr{M}))))\bigcup \Supp(\ker(H^1(K^\bullet(f_1;\scr{M}))\xrightarrow{f_2}H^1(K^\bullet(f_1;\scr{M}))))
\end{align*}}

Note that swapping $f_1$ and $f_2$ does not affect $H^1(K^\bullet(f_1,f_2;\scr{M}))$; consequently 
\[\Supp(\coker(H^0(K^\bullet(f_2;\scr{M}))\xrightarrow{f_1}H^0(K^\bullet(f_2;\scr{M}))))\subseteq \Supp(H^1(K^\bullet(f_1,f_2;\scr{M}))).\]

Hence
\begin{align*}\Supp(H^0(K^\bullet(f_1,f_2;\scr{M})))=&\Supp(\coker(H^0(K^\bullet(f_1;\scr{M}))\xrightarrow{f_2}H^0(K^\bullet(f_1;\scr{M}))))\\
&\bigcup  \Supp(\coker(H^0(K^\bullet(f_2;\scr{M}))\xrightarrow{f_1}H^0(K^\bullet(f_2;\scr{M}))))\\
&\bigcup \Supp(\ker(H^1(K^\bullet(f_1;\scr{M}))\xrightarrow{f_2}H^1(K^\bullet(f_1;\scr{M})))).
\end{align*}

First we treat $\Supp(\ker(H^1(K^\bullet(f_1;\scr{M}))\xrightarrow{f_2}H^1(K^\bullet(f_1;\scr{M}))))$. Note that 
\[\ker(H^1(K^\bullet(f_1;\scr{M}))\xrightarrow{f_2}H^1(K^\bullet(f_1;\scr{M})))\cong \ker(\frac{\scr{M}}{f_1\scr{M}}\xrightarrow{f_2}\frac{\scr{M}}{f_1\scr{M}})\cong \frac{f_1\scr{M}:_{\scr{M}}f_2}{f_1\scr{M}}.\]
Let $L$ denote a root of $\scr{M}$; that is, $L$ is finitely generated $R$-submodule of $\scr{M}$ equipped with an injective $R$-module morphism $\beta:L\to \bF(L)$ that generates the $F$-module $\scr{M}$. We will set $L_e:=\bF^e(L)\subseteq \scr{M}$ and view $L_e$ as a submodule of $L_{e+1}$ via the injective $R$-module morphism $F^e(\beta)$. Note that $\scr{M}=\cup_{e\geq 1}L_e$.

{\it Claim 1.} $\Supp\Big(\frac{f_1\scr{M}:_{\scr{M}}f_2}{f_1\scr{M}}\Big)=\bigcup_{e\geq 1}\Supp\Big( \frac{f_1\scr{M}\cap L_e:_{L_e}f_2}{f_1\scr{M}\cap L_e}\Big)$.

Assume that $\frac{f_1\scr{M}:_{\scr{M}}f_2}{f_1\scr{M}}=0$. For each $e\geq 1$ and $z_e\in (f_1\scr{M}\cap L_e:_{L_e}f_2)$, it follows that $f_2z_e\in f_1\scr{M}\cap L_e\subseteq f_1\scr{M}$ and consequently $z_e\in f_1\scr{M}\cap L_e$. This shows that $\frac{f_1\scr{M}\cap L_e:_{L_e}f_2}{f_1\scr{M}\cap L_e}=0$ for each $e$; that is, 
\[\Supp\Big(\frac{f_1\scr{M}:_{\scr{M}}f_2}{f_1\scr{M}}\Big)\supseteq \bigcup_{e\geq 1}\Supp\Big( \frac{f_1\scr{M}\cap L_e:_{L_e}f_2}{f_1\scr{M}\cap L_e}\Big).\]
On the other hand, assume that $\frac{f_1\scr{M}\cap L_e:_{L_e}f_2}{f_1\scr{M}\cap L_e}=0$ for each $e$. For each $z\in f_1\scr{M}:_{\scr{M}}f_2\subseteq \scr{M}$, there is an $e$ such that $z\in L_e$. Consequently $f_2z\in f_1\scr{M}\cap L_e$ and hence $z\in f_1\scr{M}\cap L_e\subseteq f_1\scr{M}$ by the assumption. This shows that $\frac{f_1\scr{M}:_{\scr{M}}f_2}{f_1\scr{M}}=0$; that is,
\[\Supp\Big(\frac{f_1\scr{M}:_{\scr{M}}f_2}{f_1\scr{M}}\Big)\subseteq \bigcup_{e\geq 1}\Supp\Big( \frac{f_1\scr{M}\cap L_e:_{L_e}f_2}{f_1\scr{M}\cap L_e}\Big).\]
This finishes the proof of our Claim 1.

{\it Claim 2.} $\Supp( \frac{f_1\scr{M}\cap L:_{L}f_2}{f_1\scr{M}\cap L})= \bigcup_{e\geq 1}\Supp\Big( \frac{f_1\scr{M}\cap L_e:_{L_e}f_2}{f_1\scr{M}\cap L_e}\Big)$.

It suffices to show that if $\frac{f_1\scr{M}\cap L:_{L}f_2}{f_1\scr{M}\cap L}=0$ then $\frac{f_1\scr{M}\cap L_e:_{L_e}f_2}{f_1\scr{M}\cap L_e}=0$ for each $e\geq 1$. Applying the functor $\bF^e(-)$ to the assumption $\frac{f_1\scr{M}\cap L:_{L}f_2}{f_1\scr{M}\cap L}=0$, one deduces that $\frac{f^{p^e}_1\scr{M}\cap L_e:_{L_e}f^{p^e}_2}{f^{p^e}_1\scr{M}\cap L_e}=0$; that is,
\[f^{p^e}_1\scr{M}\cap L_e:_{L_e}f^{p^e}_2=f^{p^e}_1\scr{M}\cap L_e.\]
Let $z_e$ be an element in $f_1\scr{M}\cap L_e:_{L_e}f_2$. Since $\scr{M}$ is $(f_1,f_2)$-torsion, there exists an integer $j$ such that $f_2^{jp^e}z_e=0$. Since $f_2^{p^e}(f^{(j-1)p^e}_2z_e)=0\in f^{p^e}_1\scr{M}\cap L_e$, it follows that $f^{(j-1)p^e}_2z_e\in f^{p^e}_1\scr{M}\cap L_e$. Repeating this process, one deduces that $z_e\in f^{p^e}_1\scr{M}\cap L_e\subseteq f_1\scr{M}\cap L_e$. This proves our Claim 2.

Combining these two claims shows that 
\[\Supp(\ker(H^1(K^\bullet(f_1;\scr{M}))\xrightarrow{f_2}H^1(K^\bullet(f_1;\scr{M}))))=\Supp( \frac{f_1\scr{M}\cap L:_{L}f_2}{f_1\scr{M}\cap L})\]
which is Zariski closed as $L$ is finitely generated.

It remains to prove that 
{\small
\[\Supp(\coker(H^0(K^\bullet(f_1;\scr{M}))\xrightarrow{f_2}H^0(K^\bullet(f_1;\scr{M})))) \bigcup  \Supp(\coker(H^0(K^\bullet(f_2;\scr{M}))\xrightarrow{f_1}H^0(K^\bullet(f_2;\scr{M}))))\]}
is Zariski closed (which will complete the proof of our lemma). 

Note that 
\[H^0(K^\bullet(f_1;\scr{M}))\cong (0:_{\scr{M}}f_1)\quad {\rm and}\quad H^0(K^\bullet(f_2;\scr{M}))=(0:_{\scr{M}}f_2)\]
and consequently
\begin{align*}
\coker(H^0(K^\bullet(f_1;\scr{M}))\xrightarrow{f_2}H^0(K^\bullet(f_1;\scr{M})))&\cong \frac{(0:_{\scr{M}}f_1)}{f_2(0:_{\scr{M}}f_1)}\\
\coker(H^0(K^\bullet(f_2;\scr{M}))\xrightarrow{f_1}H^0(K^\bullet(f_2;\scr{M})))&\cong \frac{(0:_{\scr{M}}f_2)}{f_1(0:_{\scr{M}}f_2)}
\end{align*}

Since $\scr{M}=\cup_{e\geq 0}L_e$, it is straightforward to check that
\begin{equation}
\label{supp coker decomp into Le}
\begin{split}
\Supp(\frac{(0:_{\scr{M}}f_1)}{f_2(0:_{\scr{M}}f_1)})&=\bigcup_e\Supp(\frac{(0:_{L_e}f_1)}{f_2(0:_{\scr{M}}f_1)\cap (0:_{L_e}f_1)})\\
\Supp(\frac{(0:_{\scr{M}}f_2)}{f_1(0:_{\scr{M}}f_2)})&=\bigcup_e\Supp(\frac{(0:_{L_e}f_2)}{f_1(0:_{\scr{M}}f_2)\cap (0:_{L_e}f_2)})
\end{split}
\end{equation}

Since $L$ is finitely generated and is $(f_1,f_2)$-torsion, there is an integer $e_0$ such that
\begin{enumerate}
\item $f^{p^{e_0}}_1L=f^{p^{e_0}}_2L=0$, and
\item $f_1(0:_{\scr{M}}f_2)\cap (0:_Lf_2)=f_1(0:_{L_{e_0}}f_2)\cap (0:_Lf_2)$, and
\item $f_2(0:_{\scr{M}}f_1)\cap (0:_Lf_1)=f_2(0:_{L_{e_0}}f_1)\cap (0:_Lf_1)$.
\end{enumerate}

Note that $f^{p^{e_0}}_1L=f^{p^{e_0}}_2L=0$ implies that 
\begin{equation}
\label{torsion for Le}
f^{p^{e_0+e}}_1L_e=f^{p^{e_0+e}}_2L_e=0
\end{equation}
for each integer $e\geq 1$.

{\it Claim 3.} 
\begin{align*}
&\quad\Supp(\frac{(0:_{\scr{M}}f_1)}{f_2(0:_{\scr{M}}f_1)})\cup \Supp(\frac{(0:_{\scr{M}}f_2)}{f_1(0:_{\scr{M}}f_2)})\\
&=\Supp(\frac{(0:_{L}f_1)}{f_2(0:_{\scr{M}}f_1)\cap (0:_{L}f_1)})\cup \Supp(\frac{(0:_{L_{e_0}}f_1)}{f_2(0:_{\scr{M}}f_1)\cap (0:_{L_{e_0}}f_1)})\\
&\quad \cup  \Supp(\frac{(0:_{L}f_2)}{f_1(0:_{\scr{M}}f_2)\cap (0:_{L}f_2)}) \cup \Supp(\frac{(0:_{L_{e_0}}f_2)}{f_1(0:_{\scr{M}}f_2)\cap (0:_{L_{e_0}}f_2)})
\end{align*}

The inclusion $\supseteq$ follows from (\ref{supp coker decomp into Le}); it remains to show $\subseteq$. To this end, assume that
\begin{itemize}
\item $(0:_{L}f_1)\subseteq f_2(0:_{\scr{M}}f_1)$, and
\item $(0:_{L_{e_0}}f_1)\subseteq f_2(0:_{\scr{M}}f_1)$, and
\item $(0:_{L}f_2)\subseteq f_1(0:_{\scr{M}}f_2)$, and
\item $(0:_{L_{e_0}}f_2)\subseteq f_1(0:_{\scr{M}}f_2)$.
\end{itemize}
and we need to show $(0:_{\scr{M}}f_1)=f_2(0:_{\scr{M}}f_1)$ and $(0:_{\scr{M}}f_2)=f_1(0:_{\scr{M}}f_2)$. 

Note it follows from our choice of $e_0$ that $(0:_{L}f_1)\subseteq f_2(0:_{L_{e_0}}f_1)$ and $(0:_{L}f_2)\subseteq f_1(0:_{L_{e_0}}f_2)$.

Given the symmetry between $f_1$ and $f_2$, it suffices to show that $(0:_{\scr{M}}f_1)=f_2(0:_{\scr{M}}f_1)$. 

Let $z\in (0:_{\scr{M}}f_1)$ be an arbitrary nonzero element. Then $z\in (0:_{L_e}f_1)$ for an integer $e$ since $\scr{M}=\cup_eL_e$. It follows from (\ref{torsion for Le}) that $f^{p^{e_0+e}}_2z=0$ since $f^{p^{e_0+e}}_2L_e=0$. That is, 
\[z\in (0:_{L_e}f^{p^{e_0+e}}_2)\subseteq (0:_{L_{e_0+e}}f^{p^{e_0+e}}_2)=\bF^{e_0+e}(0:_Lf_2)\subseteq \bF^{e_0+e}(f_1(0:_{L_{e_0}}f_2))=f^{p^{e_0+e}}_1(0:_{L_{2e_0+e}}f^{e_0+e}_2))\]
Hence, there is a $y\in (0:_{L_{2e_0+e}}f^{e_0+e}_2))$ such that $z=f^{p^{e_0+e}}_1y=f^{p^{e_0+e}-1}_1(f_1y)$. Note that
\[f^{p^{e_0+e}}_1(f_1y)=f_1f^{p^{e_0+e}}_1y=f_1z=0\]
which implies that
\[f_1y\in (0:_{L_{2e_0+e}}f^{p^{e_0+e}}_1)=\bF^{e_0+e}((0:_{L_{e_0}}f_1))\subseteq \bF^{e_0+e}(f_2(0:_{\scr{M}}f_1))=f^{p^{e_0+e}}_2(0:_{\scr{M}}f^{p^{e_0+e}}_1)\]
Thus, there is an $w\in (0:_{\scr{M}}f^{p^{e_0+e}}_1)$ such that $f_1y=f^{p^{e_0+e}}_2w$. Set 
\[x=f^{p^{e_0+e}-1}_1f^{p^{e_0+e}-1}_2w.\]
Then 
\[f_2x=f_2f^{p^{e_0+e}-1}_2f^{p^{e_0+e}-1}_1x=f^{p^{e_0+e}-1}_1 f^{p^{e_0+e}}_2w=f^{p^{e_0+e}-1}_1f_1y=f^{p^{e_0+e}}_1y=z\]
and
\[f_1x=f_1f^{p^{e_0+e}-1}_2f^{p^{e_0+e}-1}_1w=f^{p^{e_0+e}-1}_2f^{p^{e_0+e}}_1w=0\]
since $f^{p^{e_0+e}}_1w=0$ by the choice of $w$. This proves that $z=f_2x$ and $x\in (0:_{\scr{M}}f_1)$; that is, $z\in f_2(0:_{\scr{M}}f_1)$ and hence completes the proof of our Claim 3.

Note that Claim 3 implies $\Supp(\frac{(0:_{\scr{M}}f_1)}{f_2(0:_{\scr{M}}f_1)})\cup \Supp(\frac{(0:_{\scr{M}}f_2)}{f_1(0:_{\scr{M}}f_2)})$ is Zariski closed since both $L$ and $L_{e_0}$ are finitely generated.

Combining our 3 claims completes the proof of our theorem.
\end{proof}

We now return to the general case when $\scr{M}$ is an arbitrary $F$-finite $F$-module. Let $\Gamma$ denote $\Gamma_{(f_1,f_2)}(\scr{M})$. The short exact sequence
\[0\to \Gamma\to \scr{M}\to \scr{M}/\Gamma \to 0\]
induces an exact sequence on Koszul cohomology
\begin{equation}
\label{koszul involving gamma}
0= H^0(K^\bullet(\underline{f};\scr{M}/\Gamma(\scr{M})))\to H^1(K^\bullet(\underline{f};\Gamma))\to H^1(K^\bullet(\underline{f};\scr{M}))\to H^1(K^\bullet(\underline{f};\scr{M}/\Gamma))\xrightarrow{\delta}H^2(K^\bullet(\underline{f};\Gamma))
\end{equation}

The connecting morphism $\delta$ can be constructed as follows. Each element in $H^1(\underline{f};\scr{M}/\Gamma)$ can be represented by a pair $(a,b)$ with $-f_2a+f_1b=0\in \scr{M}/\Gamma$ and $a,b\in \scr{M}/\Gamma$; equivalently, each element in $H^1(\underline{f};\scr{M}/\Gamma)$ can be represented by a pair $(a,b)$ in $\scr{M}\oplus \scr{M}$ such that $-f_2 a+f_1b\in \Gamma$. Then
\[\delta(a,b)=\overline{-f_2 a+f_1b}\in \frac{\Gamma}{(f_1,f_2)\Gamma}\cong H^2(K^\bullet(\underline{f};\Gamma)).\]

Following notation in the proof of Lemma \ref{torsion case of length 2}, we denote by $L$ a root of $\scr{M}$; that is, $L$ is a finitely generated $R$-module with an injective $R$-module morphism $\beta:L\to \bF(L)$ that generates $\scr{M}$.

\begin{lemma}
\label{kernel of delta}
$\Supp(\ker(\delta))=\Supp\Big(\frac{(f_1\scr{M}\cap L:_L f_2)}{(f_1\scr{M}\cap L:_L f_2)\cap (\cup_{j\geq 0}((f^{j+1}_1\scr{M}\cap L:_L f^j_1)))} \Big)$. In particular, it is Zariski closed.
\end{lemma}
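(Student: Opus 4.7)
The plan is to convert the connecting-map description of $\delta$ into a concrete quotient of submodules of $\scr{M}$, then use the root $L$ and Frobenius iteration to reduce the support question to a finitely generated computation, mirroring the three-claim structure in the proof of Theorem \ref{torsion case of length 2}. First I would unpack $\delta$: by chasing the description given in the paragraph preceding the lemma, a class $[a,b] \in H^1(\underline{f};\scr{M}/\Gamma)$ lies in $\ker(\delta)$ iff one can adjust the lifts by an element of $\Gamma^{\oplus 2}$ to achieve $f_1 b = f_2 a$ \emph{exactly} in $\scr{M}$. This provides a natural recipe sending $x \in (f_1\scr{M}\cap L :_L f_2) \subseteq \scr{M}$ (with chosen $x'$ satisfying $f_1 x' = f_2 x$) to the class of $(x,x')$ in $\ker(\delta)$.

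Next I would establish the support equality in stages. Using $\scr{M} = \bigcup_e L_e$, every class in $\ker(\delta)$ is represented by some $x \in L_e \cap (f_1\scr{M}:_\scr{M} f_2)$, leading to the decomposition
\[\Supp(\ker(\delta)) = \bigcup_{e\geq 0} \Supp\!\left(\frac{L_e \cap (f_1\scr{M}:_\scr{M}f_2)}{\text{kernel of the recipe at level }e}\right),\]
analogous to Claim 1 in the proof of Theorem \ref{torsion case of length 2}. Then, as in Claim 2 there, one applies $\bF^e$ (exact since $R$ is regular) to collapse this union down to the $e=0$ term: vanishing at level $e$ becomes vanishing at level $0$ once we absorb the $f_i^{p^e}$ powers and use the chain structure of $X$.

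The heart of the argument is identifying the kernel at the $L$-level: I need to show that $x \in (f_1\scr{M}\cap L:_L f_2)$ maps to $0$ in $\ker(\delta)$ (at a prime $\mathfrak{p}$) precisely when $x$ lies in $X = \bigcup_j(f_1^{j+1}\scr{M}\cap L :_L f_1^j)$. The inclusion "$x \in X$ implies vanishing" is the easy direction, since $x\in X$ forces $x \in f_1\scr{M} + \Gamma_{(f_1)}(\scr{M})$, and one checks the resulting class is killed by $H^1(\underline{f};\Gamma)$. The hard direction---upgrading "$x \in f_1\scr{M} + \Gamma$" (which controls $\ker(\delta)$) to "$x \in X$" (which is stated in purely $(f_1)$-iterated terms)---is the main obstacle. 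I would attack it by iterating a Claim~3-style computation: given an expression $x + f_1c \in \Gamma$, repeatedly apply $\bF^e$ to shift $f_2$-torsion information into $f_1$-type relations, using that $f_2^{t}$ annihilates the $\Gamma$-component for large $t$ and that $L$ is noetherian so the chain stabilizes.

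Finally, once the support equality is established, the RHS is a quotient of finitely generated $R$-modules (since $L$ is noetherian and $X$ stabilizes at some finite $j_0$), so its support is Zariski-closed, giving the "in particular" clause. The hardest step is unquestionably the Frobenius-bridging in the third paragraph: controlling the interplay between $(f_1,f_2)$-torsion and iterated $f_1$-colons requires the same delicate manipulations that appear in Claim 3 of Theorem \ref{torsion case of length 2}, with the added complication that $\scr{M}$ is no longer assumed $(f_1,f_2)$-torsion.
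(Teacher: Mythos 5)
Your overall architecture does match the paper's: normalize representatives of $\ker(\delta)$ so that $f_2a=f_1b$ holds exactly, characterize vanishing by the colon modules $(f_1\scr{M}:_{\scr{M}}f_2)$ and $\cup_j(f_1^{j+1}\scr{M}:_{\scr{M}}f_1^j)$, and then descend from $\scr{M}$ to the root $L$ using $\bF^e$. However, you have inverted which step carries the weight. The step you call ``unquestionably the hardest'' --- upgrading $x\in f_1\scr{M}+\Gamma$ to $x\in\cup_j(f_1^{j+1}\scr{M}:_{\scr{M}}f_1^j)$ --- is immediate: $\Gamma=\Gamma_{(f_1,f_2)}(\scr{M})$ is in particular $f_1$-power torsion, so writing $x=f_1c+\gamma$ with $f_1^j\gamma=0$ gives $f_1^jx=f_1^{j+1}c$ at once. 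No Frobenius iteration, no shifting of $f_2$-torsion into $f_1$-relations, and no noetherian stabilization is needed there; the machinery you propose for that step has nothing to do.

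The genuine crux, which your plan leaves unspecified, is the transfer of the containment from $L$ up to $\scr{M}$, i.e.\ showing that $(f_1\scr{M}\cap L:_Lf_2)\subseteq\cup_j(f_1^{j+1}\scr{M}\cap L:_Lf_1^j)$ forces $(f_1\scr{M}:_{\scr{M}}f_2)\subseteq\cup_j(f_1^{j+1}\scr{M}:_{\scr{M}}f_1^j)$. You wave at this as ``analogous to Claim~2'' of Theorem~\ref{torsion case of length 2}, but Claim~2's descent from $f_2^{p^e}$-colons to $f_2$-colons runs a descending induction off $f_2^{jp^e}z_e=0$, which uses exactly the $(f_1,f_2)$-torsion hypothesis that is absent here. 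The paper's substitute is a different device: for $a\in L_e$ with $f_2a=f_1b$, multiply to obtain $f_2^{p^e}(f_1^{p^e-1}a)=f_1^{p^e}(f_2^{p^e-1}b)$, so that $f_1^{p^e-1}a$ lies in the $\bF^e$-twisted colon $(f_1^{p^e}\scr{M}\cap L_e:_{L_e}f_2^{p^e})$, and then read the resulting relation $f_1^{(\ell+1)p^e-1}a=f_1^{(\ell+1)p^e}c$ back as membership in $\cup_j(f_1^{j+1}\scr{M}:_{\scr{M}}f_1^j)$. Without this trick (or an equivalent) your reduction to a finitely generated module does not close. Two smaller points: your ``recipe'' $x\mapsto[(x,x')]$ is not well defined, since different choices of $x'$ differ by an element of $(0:_{\scr{M}}f_1)$ whose class need not vanish (the paper avoids this by arguing two implications about vanishing rather than constructing a map); and $\Gamma_{(f_1)}(\scr{M})$ is not contained in $\Gamma_{(f_1,f_2)}(\scr{M})$, so your ``easy direction'' as written does not yet show the class dies in $H^1(\underline{f};\scr{M}/\Gamma)$.
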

\begin{proof}
First we would like to prove that following claim.

{\it Claim.} $\Supp(\ker(\delta))=\Supp\Big(\frac{(f_1\scr{M}:_{\scr{M}}f_2)}{(f_1\scr{M}:_{\scr{M}}f_2)\cap (\cup_{j}(f^{j+1}_1\scr{M}:_{\scr{M}}f^j_1))} \Big)$.

To prove our claim, we show that 
\[\ker(\delta)=0\Leftrightarrow (f_1\scr{M}:_{\scr{M}}f_2)=(f_1\scr{M}:_{\scr{M}}f_2)\cap (\cup_{j}(f^{j+1}_1\scr{M}:_{\scr{M}}f^j_1)).\]

Each element in $\ker(\delta)$ can be represented by $(a,b)$ with $a,b\in \scr{M}$ such that $f_1b-f_2a\in (f_1,f_2)\Gamma$. That is, there are $u,v\in \Gamma$ such that $f_2b-f_1a=f_1u+f_2v$. By replacing $a,b$ with $a+u,b-v$ (which does not change the images of $a,b$ in $\scr{M}/\Gamma$), one can assume that $f_2a=f_1b$. 

Assume that $\ker(\delta)=0$. Given each $a\in (f_1\scr{M}:_{\scr{M}}f_2)$, there is an element $b\in \scr{M}$ such that $f_2a=f_1b$ and hence $(a,b)$ produces an element in $\ker(\delta)$ is zero by our assumption. Hence there is an element $c\in \scr{M}$ such that
\[(f_1c,f_2c)=(a,b)\in (\scr{M}/\Gamma)^{\oplus 2};\] 
that is, there is an integer $j$ such that $f^j_1(f_1c-a)=0$ which implies that $a\in (f^{j+1}_1\scr{M}:_{\scr{M}}f^j_1)$. This proves that $(f_1\scr{M}:_{\scr{M}}f_2)=(f_1\scr{M}:_{\scr{M}}f_2)\cap (\cup_{j}(f^{j+1}_1\scr{M}:_{\scr{M}}f^j_1))$.

On the other hand, assume that $(f_1\scr{M}:_{\scr{M}}f_2)=(f_1\scr{M}:_{\scr{M}}f_2)\cap (\cup_{j}(f^{j+1}_1\scr{M}:_{\scr{M}}f^j_1))$. Let $(a,b)$ be an element in $\ker(\delta)$. According to the discussion above, we can assume that $f_2a=f_1b$ and hence $a\in (f_1\scr{M}:_{\scr{M}}f_2)$. It follows from the assumption that there is an integer $j$ such that $f_1^ja=f^{j+1}_1c$. Then
\[f^{j+1}_1(f_2c-b)=f_2f_1^{j+1}a-f^{j+1}_1b=f_2f^j_1a-f^{j+1}_1b=f^{j+1}_1b-f^{j+1}_1b=0\]
and hence
\[(f_1c,f_2c)=(a,b)\in (\scr{M}/\Gamma)^{\oplus 2}\]
which shows that $(a,b)=0\in H^1(\underline{f};\scr{M}/\Gamma)$. This finishes the proof of our claim.

It remains to show that 
{\small
\[\Supp\Big(\frac{(f_1\scr{M}:_{\scr{M}}f_2)}{(f_1\scr{M}:_{\scr{M}}f_2)\cap (\cup_{j}(f^{j+1}_1\scr{M}:_{\scr{M}}f^j_1))} \Big)=\Supp\Big(\frac{(f_1\scr{M}\cap L:_L f_2)}{(f_1\scr{M}\cap L:_L f_2)\cap (\cup_{j\geq 0}((f^{j+1}_1\scr{M}\cap L:_L f^j_1)))} \Big)\]}
which is equivalent to proving
\[(f_1\scr{M}:_{\scr{M}}f_2)\subseteq \cup_{j\geq 0}(f^{j+1}_1\scr{M}:_{\scr{M}}f^j_1)\Leftrightarrow (f_1\scr{M}\cap L:_L f_2)\subseteq \cup_{j\geq 0}(f^{j+1}_1\scr{M}\cap L:_L f^j_1)\]

We begin with the implication $\Rightarrow$. Assume that $(f_1\scr{M}:_{\scr{M}}f_2)\subseteq \cup_{j\geq 0}(f^{j+1}_1\scr{M}:_{\scr{M}}f^j_1)$. Let $a\in (f_1\scr{M}\cap L:_L f_2)$ be an arbitrary element. Then, as $L\subseteq \scr{M}$, there is an integer $j$ and element $c\in \scr{M}$ such that $f^j_1a=f^{j+1}_1c$. This shows that $a\in (f^{j+1}_1\scr{M}\cap L:_L f^j_1)$ since $f^j_1a=f^{j+1}_1c\in f^{j+1}_1\scr{M}\cap L$. This proves the implication $\Rightarrow$.

We now prove the implication $\Leftarrow$. Assume that $(f_1\scr{M}\cap L:_L f_2)\subseteq \cup_{j\geq 0}(f^{j+1}_1\scr{M}\cap L:_L f^j_1)$. Let $a\in (f_1\scr{M}:_{\scr{M}}f_2)$ be an arbitrary element. Then $f_2a=f_1b$ for some element $b\in \scr{M}$. Since $\scr{M}=\cup_{e\geq 0}L_e$, there is an integer $e$ such that $a\in L_e$.

Apply the functor $\bF^e(-)$ to $(f_1\scr{M}\cap L:_L f_2)\subseteq \cup_{j\geq 0}(f^{j+1}_1\scr{M}\cap L:_L f^j_1)$. Let $a\in (f_1\scr{M}:_{\scr{M}}f_2)$ implies that
\[(f^{p^e}_1\scr{M}\cap L_e:_{L_e} f^{p^e}_2)\subseteq \cup_{j\geq 0}(f^{(j+1)p^e}_1\scr{M}\cap L_e:_{L_e} f^{jp^e}_1).\]

The equation $f_2a=f_1b$ implies that $f^{p^e})_2f^{p^e-1}_1a=f^{p^e}_1f^{p^e-1}_2b$ and hence 
\[f^{p^e-1}_1a\in (f^{p^e}_1\scr{M}\cap L_e:_{L_e} f^{p^e}_2)\subseteq \cup_{j\geq 0}(f^{(j+1)p^e}_1\scr{M}\cap L_e:_{L_e} f^{jp^e}_1).\]
Therefore, there is an integer $ell$ and element $c\in \scr{M}$ such that
\[f^{(\ell+1)p^e-1}_1a=f^{\ell p^e}_1f^{p^e-1}_1a=f^{(\ell+1)p^e}_1c\]
which implies that
\[a\in (f^{(\ell+1)p^e}_1\scr{M}:_{\scr{M}}f^{(\ell+1)p^e-1}_1)\subseteq \cup_{j\geq 0}(f^{j+1}_1\scr{M}:_{\scr{M}}f^j_1).\]
This proves the implication $\Leftarrow$ and hence finishes the proof of our lemma.
\end{proof}

\begin{proof}[Proof of Theorem \ref{1st Koszul with two elements}]
It follows from the exact sequence (\ref{koszul involving gamma}) that
\[\Supp(H^1(K^\bullet(f_1,f_2; \scr{M})))=\Supp(H^1(\underline{f};\Gamma))\cup \Supp(\ker(\delta)).\]
Combining Theorem \ref{torsion case of length 2} and Lemma \ref{kernel of delta} completes the proof. 
\end{proof}

Combining Theorems \ref{Koszul two ends closed} and \ref{1st Koszul with two elements}, the following result is immediate:
\begin{theorem}
\label{closed for Koszul on two elements}
Let $R$ be a noetherian regular ring of prime characteristic $p$ and $f_1,f_2\in R$ form a regular sequence. Then, for every $F$-finite $F$-module, $\Supp(H^i(K^\bullet(f_1,f_2; \scr{M}))$ is Zariski-closed for each integer $i$.
\end{theorem}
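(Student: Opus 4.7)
The plan is straightforward: since $f_1, f_2$ is a sequence of length $c = 2$, the Koszul cochain complex $K^\bullet(f_1, f_2; \scr{M})$ is concentrated in cohomological degrees $0, 1, 2$, so for any integer $i \notin \{0, 1, 2\}$ the group $H^i(K^\bullet(f_1, f_2; \scr{M}))$ vanishes and has empty (hence Zariski-closed) support. Thus I only need to verify the three remaining cases $i = 0, 1, 2$, each of which is already handled by a previously established theorem.

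For the two extreme degrees $i = 0$ and $i = c = 2$, I would directly invoke Theorem \ref{Koszul two ends closed}, which asserts precisely that $\Supp(H^0(\underline{f}; \scr{M}))$ and $\Supp(H^c(\underline{f}; \scr{M}))$ are Zariski-closed for every $F$-finite $F$-module $\scr{M}$ and every sequence $\underline{f} = f_1, \dots, f_c$ in the noetherian regular ring $R$ of prime characteristic $p$. Note that this input does not even require the regular-sequence hypothesis, so it applies verbatim to our $f_1, f_2$.

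For the middle degree $i = 1$, I would invoke Theorem \ref{1st Koszul with two elements}, which is stated for arbitrary elements $f_1, f_2 \in R$ (again not requiring regularity of the sequence) and any $F$-finite $F$-module $\scr{M}$, and which asserts exactly the Zariski-closedness of $\Supp(H^1(K^\bullet(f_1, f_2; \scr{M})))$. Concatenating the three cases completes the argument.

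There is no serious obstacle in this final step: all the substantive content has already been absorbed into the previous two theorems. The difficulty in Theorem \ref{Koszul two ends closed} lay in expressing the relevant kernel/cokernel as a direct limit of ideal quotients in a fixed $\coker(A^{[p^j]})$ and stabilizing them via Frobenius, while the difficulty in Theorem \ref{1st Koszul with two elements} lay in reducing to the $(f_1, f_2)$-torsion case (handled in Theorem \ref{torsion case of length 2}) and then computing $\ker(\delta)$ in the long exact sequence via Lemma \ref{kernel of delta}. Once those are in hand, the present theorem is merely bookkeeping, and the only ``global'' observation worth recording is that a finite union of Zariski-closed sets is Zariski-closed.
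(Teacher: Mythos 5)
Your proposal is correct and matches the paper exactly: the authors likewise obtain this theorem immediately by combining Theorem \ref{Koszul two ends closed} (for $i=0,2$) with Theorem \ref{1st Koszul with two elements} (for $i=1$), the remaining degrees being trivially zero. No further comment is needed.
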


\section{The support of $E^{\bullet,\bullet}_{\infty}$ when $t=2$}
\label{support of E infinity}
In this section, we prove that the support of $E^{i,j}_{\infty}$ is Zariski closed for all integers $i,j$ and the main theorem of this article: Theorem \ref{closed for lc of ci}. Let $R$ be a noetherian commutative ring, $I=(g_1,\dots,g_s)$ be an ideal and $f_1,f_2\in R$ be a regular sequence. Then the Koszul (co)complex $K^\bullet(\underline{f};R)$ and the \v{C}ech complex $\check{C}^\bullet(\underline{g};R)$ induce the double complex (\ref{KC double complex}) introduced in \S\ref{double complex and spectral sequence}. This double complex induces a spectral sequence associates whose $E^{\bullet,\bullet}_2$-page is as follows:
\[E^{i,j}_2:=H^i(K^{\bullet}(\underline{f}; H^j_I(R))\Rightarrow H^{i+j}(T^\bullet).\]

Note that when $t=2$ there is only one (potentially) nontrivial differential on the $E_2$-page:
\[d^{0,j}_2:E^{0,j}_2\to E^{2,j-1}_2\]
Consequently
\begin{equation}
\label{E infinity for KC double complex}
E^{1,j}_{\infty}=E^{1,j}_2,\quad E^{0,j}_{\infty}=E^{0,j}_3=\ker(d^{0,j}_2),\quad E^{2,j}_\infty=E^{2,j}_3=\coker(d^{0,j}_2).
\end{equation}
We have seen in \S\ref{1st Koszul length 2} that the support of $E^{1,j}_2=H^1(K^\bullet(f_1,f_2;H^j_I(R)))$ is Zariski closed. It remains to show that both $\Supp(\ker(d^{0,j}_2))$ and $\Supp(\coker(d^{0,j}_2))$ are Zariski-closed. To this end, we begin with analyzing the construction of $d^{0,j}_2$.

\begin{remark}
\label{edge map on E_2}
We would like to recall the construction of $d^{0,j}_2$; the interested reader is referred to \cite[5.1.2]{WeibelBook} for more details. In order to cover the double complexes (\ref{KC double complex}) and (\ref{apply Frob to double with truncated}), we will consider a first quadrant double complex formed by the Koszul co-complex $K^{\bullet}(\underline{t};R)$ on two elements $t_1,t_2$ and a finite complex $C^{\bullet}$ of $R$-modules (differentials in $C^{\bullet}$ will be denoted by $d^\bullet_h$):
\begin{equation}
\label{general double complex}
\xymatrix{
& 0 & 0& 0 & & 0 \\
	0\ar[r] & C^0 \ar[r]\ar[u] &{C^1} \ar[r]\ar[u] &  {C^2} \ar[r]\ar[u] & \cdots \ar[r] & {C^s}\ar[r]\ar[u] & 0  \\
	0\ar[r] & (C^0)^{\oplus 2} \ar[r]\ar[u]^{{\small \begin{pmatrix}-t_2& t_1 \end{pmatrix}}} & {(C^1)^{\oplus 2}} \ar[r]\ar[u]^{{\small \begin{pmatrix}-t_2& t_1 \end{pmatrix}}} & {(C^2)^{\oplus 2}} \ar[r]\ar[u]^{{\small \begin{pmatrix}-t_2& t_1 \end{pmatrix}}}  & \cdots \ar[r]  & {(C^s)^{\oplus 2}} \ar[r]\ar[u]^{{\small \begin{pmatrix}-t_2& t_1 \end{pmatrix}}}  & 0 \\
	0\ar[r] & C^0  \ar[r]\ar[u]^{{\small \begin{pmatrix}t_1\\t_2 \end{pmatrix}}} & {C^1} \ar[r]\ar[u]^{{\small \begin{pmatrix}t_1\\t_2 \end{pmatrix}}} & {C^2} \ar[r]\ar[u]^{{\small \begin{pmatrix}t_1\\t_2 \end{pmatrix}}} & \cdots\ar[r] & {C^s} \ar[r]\ar[u]^{{\small \begin{pmatrix}t_1\\t_2 \end{pmatrix}}} & 0 \\
	& 0\ar[u] & 0\ar[u]& 0\ar[u] & & 0\ar[u]
}
\end{equation}

Each element $[\eta]\in H^0(K^\bullet(t_1,t_2;H^j(C^{\bullet}))$ is an element $[\eta]\in H^j(C^{\bullet})$ such that $(t_1[\eta], t_2[\eta])=(0,0)\in (H^j(C^{\bullet}))^{\oplus 2}$; equivalently $[\eta]$ can be represented by element $\eta\in C^j$ such that $d^j_h(\eta)=0$ and there are elements $(\alpha_1,\alpha_2)\in (C^{j-1})^{\oplus 2}$ such that 
\[d^{j-1}_h(\alpha_1)=t_1\eta\quad {\rm and}\quad d^{j-1}_h(\alpha_2)=t_2\eta.\] 
Consider $-t_2\alpha_1+t_1\alpha_2\in C^{j-1}$. Since
\[d^{j-1}_h(-t_2\alpha_1+t_1\alpha_2)=-t_2d^{j-1}_h(\alpha_1)+t_1d^{j-1}_h(\alpha_2)=-t_2t_1\eta+t_1t_2\eta=0\]
the element $-t_2\alpha_1+t_1\alpha_2\in C^{j-1}$ represents an element $[-t_2\alpha_1+t_1\alpha_2]\in H^{j-1}(C^\bullet)$. Then
\[d^{0,j}_2([\eta])=\overline{[-t_2\alpha_1+t_1\alpha_2]}\in E^{2,j-1}_2=H^2(K^\bullet(f_1,f_2;H^{j-1}(C^\bullet)))\cong \frac{H^{j-1}(C^\bullet)}{(t_1,t_2)H^{j-1}(C^\bullet)}.\]

For instance, the edge map in the spectral sequence associated with the double complex (\ref{apply Frob to double with truncated})
\[\varphi^{0,j}_{2,e}:H^0(K^\bullet(f^{p^e}_1,f^{p^e}_2;H^j(\check{C}^\bullet(\underline{g})_e))\to H^2(K^\bullet(f^{p^e}_1,f^{p^e}_2;H^{j-1}(\check{C}^\bullet(\underline{g})_e))\]
can be described as follows. Each element $[\eta]\in H^0(K^\bullet(f^{p^e}_1,f^{p^e}_2;H^j(\check{C}^\bullet(\underline{g})_e))$ is an element $[\eta]\in H^j(\check{C}^\bullet(\underline{g})_e)$ such that $(f^{p^e}_1[\eta], f^{p^e}_2[\eta])=(0,0)\in (H^j(\check{C}^\bullet(\underline{g})_e))^{\oplus 2}$; equivalently $[\eta]$ can be represented by element $\eta\in \check{C}^j(\underline{g})_e$ such that $\delta^j(\eta)=0$ and there are elements $\alpha_1,\alpha_2\in \check{C}^{j-1}(\underline{g})_e$ such that 
\[\delta^{j-1}(\alpha_1)=f^{p^e}_1\eta\quad {\rm and}\quad \delta^{j-1}(\alpha_2)=f^{p^e}_2\eta.\] 
Consider $-f^{p^e}_2\alpha_1+f^{p^e}_1\alpha_2\in C^{j-1}_e$. Since
\[\delta^{j-1}(-f^{p^e}2\alpha_1+f^{p^e}_1\alpha_2)=-f^{p^e}_2\delta^{j-1}(\alpha_1)+f^{p^e}_1\delta^{j-1}(\alpha_2)=-f^{p^e}_2f^{p^e}_1\eta+f^{p^e}_1f^{p^e}_2\eta=0\]
the element $-f^{p^e}_2\alpha_1+f^{p^e}_1\alpha_2\in \check{C}^{j-1}(\underline{g})$ represents an element $[-f^{p^e}_2\alpha_1+f^{p^e}_1\alpha_2]\in H^{j-1}_I(R)$. Then
\begin{equation}
\label{edge map Frob applied to truncated double complex}
\varphi^{0,j}_{2,e}([\eta])=\overline{[-f^{p^e}_2\alpha_1+f^{p^e}_1\alpha_2]}\in H^2(K^\bullet(f^{p^e}_1,f^{p^e}_2;H^{j-1}(\check{C}^\bullet_e)))\cong \frac{H^{j-1}(\check{C}^\bullet(\underline{g})_e)}{(f^{p^e}_1,f^{p^e}_2)H^{j-1}(\check{C}^\bullet(\underline{g})_e)}.
\end{equation}
\end{remark}

To ease notation, for the rest of this section we will denote the \v{C}ech complex $\check{C}^\bullet(\underline{g})$ by $\check{C}^\bullet$ and its $e$-th truncation $\check{C}^\bullet(\underline{g})_e$ by $\check{C}^\bullet_e$.

Recall that the double complex ${\bf D}_0$ induces the spectral sequence (\ref{spectral sequence truncated}):
\[
E^{i,j}_{2,0}:=H^i(K^{\bullet}(\underline{f}; H^j(\check{C}^\bullet_0))\Rightarrow H^{i+j}(T^\bullet_0)
\]
with the differentials 
\[\varphi^{i,j}_{2,0}:H^0(K^{\bullet}(\underline{f}; H^j(\check{C}^\bullet_0))\to H^2(K^{\bullet}(\underline{f}; H^{j-1}(\check{C}^\bullet_0)).\]
Let $K_0^j\subseteq \ker(\delta^j_0)\subseteq \check{C}^j_0$ be the submodule whose image in $H^j(\check{C}^{\bullet})$ is the kernel of $\varphi^{0,j}_2$, where $\delta^j_0$ denotes the $j$-th differential in $\check{C}^{\bullet}_0$. Note that 
\begin{enumerate}
\item $K^j_0$ is a finitely generated $R$-module since $\check{C}^j_0$ is so;
\item the image of $H^j(\check{C}^\bullet_0)$ in $H^j_I(R)$ is isomorphic to $\frac{\ker(\delta^j_0)}{\ker(\delta^j_0)\cap \im(\delta^{j-1})}$, where $\delta^j$ denotes the $j$-th differential in $\check{C}^{\bullet}$; this is contained in (\ref{image of truncated Cech in LC}).
\end{enumerate}

First we treat $\Supp(E^{0,j}_{\infty})$ which is $\Supp(\ker{d^{0,j}_2})$ (\ref{E infinity for KC double complex}) and we begin with the following lemma.

\begin{lemma}
\label{K 0 induces K e}
Let $R$ be a noetherian regular ring of prime characteristic $p$. Let
$\varphi^{0,j}_{2,e}$ be defined as in (\ref{edge map Frob applied to truncated double complex}). Let $K^j_e$ be the submodule of $\ker(\delta^j_e)\subseteq \check{C}^j_e$ whose image in $H^j(\check{C}^{\bullet}_e)$ is the kernel of $\varphi^{0,j}_{2,e}$. Let $\theta:\bF^e(\check{C}^j_0)\xrightarrow{\sim}\check{C}^j_e$ denote the isomorphism in Proposition \ref{apply Frob to truncated Cech}. Then 
\[\theta(\bF^e(K^j_0))= K^j_e.\] 
\end{lemma}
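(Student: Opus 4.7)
The approach is to exploit the exactness of the Peskine--Szpiro functor $\bF^e$ (valid because $R$ is regular) together with the fact that ${\bf D}_e$ is literally $\bF^e$ applied to ${\bf D}_0$ under the identification $\theta$. The plan is to argue that the entire construction defining $K^j_0$ commutes with $\bF^e$, and to transport the result through $\theta$ to obtain $K^j_e$.

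To prove the inclusion $\theta(\bF^e(K^j_0))\subseteq K^j_e$, I would first unpack the definition of $K^j_0$ using the zigzag description of the differential from Remark \ref{edge map on E_2}: an element $\eta_0\in\ker(\delta^j_0)$ lies in $K^j_0$ precisely when (a) there exist $\alpha_1^{(0)},\alpha_2^{(0)}\in \check{C}^{j-1}_0$ with $\delta^{j-1}_0(\alpha_i^{(0)})=f_i\eta_0$, and (b) the class $[-f_2\alpha_1^{(0)}+f_1\alpha_2^{(0)}]\in H^{j-1}(\check{C}^\bullet_0)$ lies in $(f_1,f_2)H^{j-1}(\check{C}^\bullet_0)$. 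Applying $\bF^e$ to both conditions and using the fact that $\bF^e$ carries multiplication by $f_i$ on any $R$-module to multiplication by $f_i^{p^e}$ on $\bF^e$, together with the isomorphisms $\theta$ from Proposition \ref{apply Frob to truncated Cech}, condition (a) becomes $\delta^{j-1}_e(\theta(1\otimes\alpha_i^{(0)}))=f_i^{p^e}\,\theta(1\otimes\eta_0)$ and condition (b) becomes the analogous divisibility in $H^{j-1}(\check{C}^\bullet_e)\cong \bF^e(H^{j-1}(\check{C}^\bullet_0))$. This is precisely the zigzag condition placing $\theta(1\otimes\eta_0)$ in $K^j_e$.

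For the reverse inclusion, the cleanest route is to rephrase the defining property. Write $K^j_0=\pi_0^{-1}(\ker\varphi^{0,j}_{2,0})$, where $\pi_0:\ker(\delta^j_0)\twoheadrightarrow H^j(\check{C}^\bullet_0)$ is the natural surjection and $\ker\varphi^{0,j}_{2,0}$ is regarded as a submodule of $H^j(\check{C}^\bullet_0)$ via the inclusion $H^0(K^\bullet(\underline{f};H^j(\check{C}^\bullet_0)))\hookrightarrow H^j(\check{C}^\bullet_0)$. Since $\bF^e$ is exact, it preserves kernels, images, cohomology, and preimages of submodules; combined with the compatibility of the zigzag construction with $\bF^e$ established in the previous paragraph, this yields the identification $\bF^e(\pi_0)=\pi_e$ and $\bF^e(\ker\varphi^{0,j}_{2,0})=\ker\varphi^{0,j}_{2,e}$ under the natural isomorphisms. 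Therefore $\bF^e(K^j_0)=\pi_e^{-1}(\ker\varphi^{0,j}_{2,e})=K^j_e$ after applying $\theta$.

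The main obstacle is verifying the naturality statement that $\bF^e(\varphi^{0,j}_{2,0})$ agrees with $\varphi^{0,j}_{2,e}$ under the natural identifications $\bF^e(H^j(\check{C}^\bullet_0))\cong H^j(\check{C}^\bullet_e)$ and $\bF^e(K^\bullet(\underline{f};-))\cong K^\bullet(\underline{f}^{p^e};\bF^e(-))$. This is intuitively clear from the fact that the spectral sequence for ${\bf D}_e$ is obtained by applying the exact functor $\bF^e$ to the spectral sequence for ${\bf D}_0$, but it requires careful tracking of representatives; the direct zigzag computation in the first inclusion above essentially performs this check by hand, after which the reverse inclusion follows by the formal preimage argument.
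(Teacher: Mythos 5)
Your proposal is correct and follows essentially the same route as the paper: the paper's proof consists of exhibiting the commutative diagram identifying $\bF^e$ of the level-$0$ zigzag data (the differentials $\delta^{j-1}_0,\delta^j_0$ and the Koszul map $\begin{pmatrix}f_1\\ f_2\end{pmatrix}$) with the level-$e$ data (with $f_i$ replaced by $f_i^{p^e}$) and invoking faithful flatness of $R^{(e)}$, which is exactly the naturality-plus-exactness argument you spell out in more detail.
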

\begin{proof}
This follows from the commutative diagram below and the fact $R^{(e)}$ is a faithfully flat $R$-module.
\[
\xymatrix{
R^{(e)}\otimes (\check{C}^{j-1}_0\oplus \check{C}^{j-1}_0) \ar[r]^{\sim} \ar[d]_{{\bf 1}\otimes (\delta^{j-1}_0\oplus \delta^{j-1}_0)} & \check{C}^{j-1}_e\oplus \check{C}^{j-1}_e \ar[d]^{\delta^{j-1}_e\oplus \delta^{j-1}_e}\\
R^{(e)}\otimes (\check{C}^{j}_0\oplus \check{C}^{j}_0) \ar[r]^{\sim} & \check{C}^{j}_e\oplus \check{C}^{j}_e\\
R^{(e)}\otimes \check{C}^{j}_0 \ar[r]^{\sim} \ar[u]^{{\bf 1}\otimes {\tiny \begin{pmatrix} f_1\\f_2\end{pmatrix}} }& \check{C}^{j}_e \ar[u]_{{\bf 1}\otimes {\tiny \begin{pmatrix} f^{p^e}_1\\f^{p^e}_2\end{pmatrix}}}
}
\]
\end{proof}

\begin{theorem}
\label{kernel of edge map}
Let $R$ be a noetherian regular ring of prime characteristic $p$ and let $E^{\bullet,\bullet}_2$ be the $E_2$-page of the spectral sequence associates with the double complex (\ref{KC double complex}). Then $\ker{d^{0,j}_2}=0$ if and only if $K^j_0\subseteq \im(\delta^{j-1})$; that is,
\begin{equation}
\label{support of kernel and 0 truncate}
\Supp(E^{0,j}_{\infty})=\Supp(\ker{d^{0,j}_2})=\Supp(\frac{K^j_0}{K^j_0\cap \im(\delta^{j-1})}).
\end{equation}
In particular, $\Supp(E^{0,j}_{\infty})=\Supp(\ker{d^{0,j}_2})$ is Zariski-closed.
\end{theorem}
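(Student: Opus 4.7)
The plan is to show $\ker(d^{0,j}_2)$, as a submodule of $H^j_I(R)$, has the same support as the image of $K^j_0$ in $H^j_I(R)$, which by definition equals $K^j_0/(K^j_0\cap \im \delta^{j-1})$. Since $\check{C}^j_0$ is a finitely generated $R$-module and $R$ is noetherian, $K^j_0$ and the quotient $K^j_0/(K^j_0\cap \im \delta^{j-1})$ are both finitely generated, so the latter has Zariski-closed support automatically. Hence the theorem will follow once the support equality is established.

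The easy inclusion, that the image of $K^j_0$ in $H^j_I(R)$ is contained in $\ker(d^{0,j}_2)$, follows directly from the construction of the edge maps. If $\eta_0\in K^j_0$, then by Remark \ref{edge map on E_2} applied to ${\bf D}_0$, there exist $\alpha_1,\alpha_2\in \check{C}^{j-1}_0$ with $\delta^{j-1}_0(\alpha_i)=f_i\eta_0$, and $u_0,v_0\in \ker \delta^{j-1}_0$, $w_0\in \check{C}^{j-2}_0$ satisfying $-f_2\alpha_1+f_1\alpha_2=f_1 u_0+f_2 v_0+\delta^{j-2}_0(w_0)$. These same witnesses, viewed inside the full \v{C}ech complex $\check{C}^\bullet$, compute $d^{0,j}_2([\eta_0])$ as the class of $f_1[u_0]+f_2[v_0]\in (f_1,f_2)H^{j-1}_I(R)$, which is zero.

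For the reverse support inclusion, I fix a prime $\mathfrak{p}$ with $(K^j_0)_\mathfrak{p}\subseteq (\im \delta^{j-1})_\mathfrak{p}$ and show $(\ker(d^{0,j}_2))_\mathfrak{p}=0$. Since $\bF^e$ is exact and faithfully flat (as $R$ is regular), commutes with localization at $\mathfrak{p}$, and identifies $\bF^e(\im \delta^{j-1})$ with $\im \delta^{j-1}$ inside $\check{C}^j$ via the $F$-module structure isomorphisms (because $\delta^{j-1}$ is an $F$-module morphism), Lemma \ref{K 0 induces K e} propagates this inclusion to $(K^j_e)_\mathfrak{p}\subseteq (\im \delta^{j-1})_\mathfrak{p}$ for every $e\geq 0$. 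Now, given $[\eta]\in \ker(d^{0,j}_2)$ with witnesses $\alpha_i,u,v,w$ in $\check{C}^\bullet$, I choose $e_0$ large enough that all of $\eta,\alpha_i,u,v,w$ lie in the $e_0$-truncation. Setting $\beta_i:=f_i^{p^{e_0}-1}\alpha_i\in \check{C}^{j-1}_{e_0}$ gives $\delta^{j-1}_{e_0}(\beta_i)=f_i^{p^{e_0}}\eta$, and a direct calculation yields
\[-f_2^{p^{e_0}}\beta_1+f_1^{p^{e_0}}\beta_2 = f_1^{p^{e_0}-1}f_2^{p^{e_0}-1}\bigl(-f_2\alpha_1+f_1\alpha_2\bigr) = f_1^{p^{e_0}}(f_2^{p^{e_0}-1}u)+f_2^{p^{e_0}}(f_1^{p^{e_0}-1}v)+\delta^{j-2}_{e_0}\bigl(f_1^{p^{e_0}-1}f_2^{p^{e_0}-1}w\bigr),\]
which places $\varphi^{0,j}_{2,e_0}([\eta]_{e_0})$ in $(f_1^{p^{e_0}},f_2^{p^{e_0}})H^{j-1}(\check{C}^\bullet_{e_0})$. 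Thus $\eta\in K^j_{e_0}$, and the transferred inclusion produces $s\in R\setminus \mathfrak{p}$ with $s\eta\in \im \delta^{j-1}$, so $[\eta]_\mathfrak{p}=0$ in $H^j_I(R)_\mathfrak{p}$, completing the claim.

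The main obstacle is the rescaling between the Koszul differentials $f_1,f_2$ appearing in $d^{0,j}_2$ and the Frobenius twists $f_1^{p^{e_0}},f_2^{p^{e_0}}$ appearing in $\varphi^{0,j}_{2,e_0}$; the bridging trick is the explicit choice of lifts $\beta_i=f_i^{p^{e_0}-1}\alpha_i$, which converts witnesses for $d^{0,j}_2([\eta])=0$ at any sufficiently high truncation level into witnesses for $\varphi^{0,j}_{2,e_0}([\eta]_{e_0})=0$, enabling the passage from level $0$ to arbitrary $e_0$ via faithful flatness of $\bF^{e_0}$.
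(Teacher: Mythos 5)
Your proposal is correct and follows essentially the same route as the paper's proof: the same easy inclusion from the construction of the edge maps, the same use of Lemma \ref{K 0 induces K e} together with exactness of $\bF^e$ to propagate the containment $K^j_0\subseteq \im(\delta^{j-1})$ to all truncation levels, and the same key rescaling $\beta_i=f_i^{p^{e_0}-1}\alpha_i$ to convert witnesses for $d^{0,j}_2([\eta])=0$ into witnesses for $\varphi^{0,j}_{2,e_0}([\eta]_{e_0})=0$. The only (harmless, and arguably slightly more careful) difference is that you run the argument after localizing at an arbitrary prime $\mathfrak{p}$, which directly yields the support equality rather than first proving the global vanishing equivalence and then asserting that it localizes.
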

\begin{proof}
The second statement follows from (\ref{support of kernel and 0 truncate}) since $K^j_0$ is finitely generated.

To complete the proof, it remains to show that $\ker{d^{0,j}_2}=0$ if and only if $K^j_0\subseteq \im(\delta^{j-1})$.

Assume that $d^{0,j}_2$ is injective and $[\eta]\in K^j_0$. One needs to show that $[\eta]\in \im(\delta^{j-1})$. Since $[\eta]$ belongs to $K^j_0$, its image in  $H^j(\check{C}^{\bullet}_0)$ must belong in $\ker(\varphi^{0,j}_{2,0})$. It follows that the image of $[\eta]$ in $H^j_I(R)$ must belong in $\ker(d^{0,j}_2)$. Since $d^{0,j}_2$ is injective, the image of $[\eta]$ in $H^j_I(R)$ must be $[0]$, which implies that  $[\eta]\in \im(\delta^{j-1})$. This proves the `if' statement.

Assume that $K^j_0\subseteq \im(\delta^{j-1})$; that is, if $\varphi^{0,j}_{2,0}([\eta])=[0]$, then $\eta\in \im(\delta^{j-1})$ (equivalently, the image $[\eta]$ of $\eta$ in $H^j_I(R)$ is zero). Note it follows from Lemma \ref{K 0 induces K e} that 
\[K^j_e\cong \bF^e(K^j_0)\subseteq \bF^e(\im(\delta^{j-1}))\cong \im(\delta^{j-1})\] 
where the last isomorphism follows from that fact that $\delta^{j-1}$ is a differential in the \v{C}ech complex and hence an $F$-module morphism. 

Let $[\tau]$ be an element in $\ker(d^{0,j}_2)$, it remains to show that $[\tau]=[0]\in H^j_I(R)$. Since $[\tau]\in \ker(d^{0,j}_2)$, there are elements $\tau\in \check{C}^j$ and $\alpha_1,\alpha_2\in \check{C}^{j-1}$ such that 
\[\delta^{j-1}(\alpha_1)=f_1\tau,\ \delta^{j-1}(\alpha_2)=f_2\tau,\ {\rm and\ } d^{0,j}_2([\tau])=\overline{[-f_2\alpha_+f_1\alpha_2]}\in (f_1,f_2)H^{j-1}_I(R).\]
Since there are finitely many cohomology classes involved, there exists an integer $e$ such that $\tau\in \check{C}^j_e$, $\alpha_1,\alpha_2\in \check{C}^{j-1}_e$, and that $d^{0,j}_2([\tau])$ can be represented by an element in $(f_1,f_2)H^{j-1}(\check{C}^{\bullet}_e)$. We will fix one such $e$ and we consider the double complex (\ref{apply Frob to double with truncated}) for this integer $e$. It follows that
\[\delta^{j-1}(f^{p^e-1}_1\alpha_1)=f^{p^e}_1\tau\ {\rm and}\ \delta^{j-1}(f^{p^e-1}_2\alpha_2)=f^{p^e}_2\tau.\]
According the description of the edge map (\ref{edge map Frob applied to truncated double complex}) associated with the double complex (\ref{apply Frob to double with truncated}):
\begin{align*}
\varphi^{0,j}_{2,e}([\tau])&=\overline{[-f^{p^e}_2f^{p^e-1}_1\alpha_1+f^{p^e}_1f^{p^e-1}_2\alpha_2]}\\
&=(f^{p^e-1}_1f^{p^e-1}_2)\overline{[-f_2\alpha_+f_1\alpha_2]}\\
&\in (f^{p^e-1}_1f^{p^e-1}_2)(f_1,f_2)H^{j-1}(\check{C}^{\bullet}_e)\\
&\in  (f^{p^e}_1,f^{p^e}_2)H^{j-1}(\check{C}^{\bullet}_e)
\end{align*} 
That is $[\tau]$ belongs in $K^j_e$ and consequently $[\tau]\in K^j_e\subseteq \im(\delta^{j-1})$. Thus, the image of $[\tau]$ in $H^j_I(R)$ is zero. This shows that, if $K^j_0\subseteq \im(\delta^{j-1})$, then $d^{0,j}_2$ is injective, which completes the proof.
\end{proof}

\begin{theorem}
\label{cokernel of edge map}
Let $R$ be a noetherian regular ring of prime characteristic $p$ and let $E^{\bullet,\bullet}_2$ be the $E_2$-page of the spectral sequence associates with the double complex (\ref{KC double complex}). Let $H\subseteq H^{j-1}_I(R)$ be the submodule generated by elements that can be represented by elements in $\check{C}^{j-1}_0$. Let $L\subseteq H^{j-1}_I(R)$ be the submodule whose image in $H^{j-1}_I(R)/(f_1,f_2)H^{j-1}_I(R)$ is $\im(d^{0,j}_2)$. Then $d^{0,j}_2$ is surjective if and only if $H\subseteq L$; that is
\begin{equation}
\label{support of cokernel and 0 truncate}
\Supp(E^{2,j-1}_{\infty})=\Supp(\coker{d^{0,j}_2})=\Supp(\frac{H}{H\cap L}).
\end{equation}
In particular, $\Supp(E^{2,j-1}_{\infty})=\Supp(\coker{d^{0,j}_2})$ is Zariski-closed.
\end{theorem}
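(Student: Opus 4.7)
The plan is to mirror Theorem~\ref{kernel of edge map}: reduce the support identity to the stated equivalence ($d^{0,j}_2$ surjective if and only if $H\subseteq L$), dispatch the trivial direction, and prove the nontrivial direction using Frobenius stability. To begin, since $L$ is the preimage of $\im(d^{0,j}_2)$ under the quotient $H^{j-1}_I(R)\twoheadrightarrow H^{j-1}_I(R)/(f_1,f_2)H^{j-1}_I(R)$, it contains $(f_1,f_2)H^{j-1}_I(R)$, so $\coker(d^{0,j}_2)\cong H^{j-1}_I(R)/L$. Because every construction in sight (local cohomology, the double complex~\eqref{KC double complex}, the spectral sequence, and the submodules $H$ and $L$) commutes with localization at any prime $\frakp$ (where regularity and exactness of $\bF^e$ persist), the identity $\Supp(\coker d^{0,j}_2)=\Supp(H/(H\cap L))$ follows from the ``iff'' statement applied over every $R_\frakp$; Zariski-closedness of $\Supp(H/(H\cap L))$ is then automatic since $H$ is the image of the finitely generated module $\check{C}^{j-1}_0$.

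The forward implication is immediate: surjectivity of $d^{0,j}_2$ forces $L=H^{j-1}_I(R)$. For the converse, let $\Phi^e\colon H^{j-1}_I(R)\to H^{j-1}_I(R)$ denote the $e$-th iterate of the $F$-module Frobenius; it is $R$-semilinear ($\Phi^e(rm)=r^{p^e}\Phi^e(m)$) and sends a \v{C}ech class $[r/g^n]$ to $[r^{p^e}/g^{np^e}]$. The backward direction rests on two claims:
\begin{enumerate}
\item[(I)] $\Phi^e(L)\subseteq L$ for every $e\geq 0$;
\item[(II)] $H^{j-1}_I(R)=\bigcup_{e\geq 0}R\cdot\Phi^e(H)$.
\end{enumerate}
Granting both, $H\subseteq L$ yields $R\cdot\Phi^e(H)\subseteq R\cdot\Phi^e(L)\subseteq L$ for every $e$, hence $H^{j-1}_I(R)\subseteq L$ and $d^{0,j}_2$ is surjective. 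Claim (II) combines the direct-limit description $H^{j-1}_I(R)=\bigcup_e H_e$ with the identification $H_e=R\cdot\Phi^e(H)$: by exactness of $\bF^e$ (Proposition~\ref{apply Frob to truncated Cech}), generators $\{\sigma_i\}$ of $\ker\delta^{j-1}_0$ give generators $\{\sigma_i^{[p^e]}\}$ of $\ker\delta^{j-1}_e$, whose classes in $H^{j-1}_I(R)$ are $\Phi^e([\sigma_i])$.

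For claim (I), take $\xi\in L$ with $\bar\xi=d^{0,j}_2([\eta])=\overline{[-f_2\alpha_1+f_1\alpha_2]}$ realized by \v{C}ech cochains $\eta\in\check{C}^j$ and $\alpha_1,\alpha_2\in\check{C}^{j-1}$ satisfying $\delta^j(\eta)=0$ and $\delta^{j-1}(\alpha_i)=f_i\eta$. Applying the \v{C}ech Frobenius---which commutes with $\delta^{j-1}$ since \v{C}ech differentials are $F$-module morphisms---yields $\delta^{j-1}(\alpha_i^{[p^e]})=f_i^{p^e}\eta^{[p^e]}$. Set $[\tilde\eta]:=f_1^{p^e-1}f_2^{p^e-1}[\eta^{[p^e]}]\in H^j_I(R)$; one checks $f_1[\tilde\eta]=f_2[\tilde\eta]=0$, so $[\tilde\eta]\in E^{0,j}_2$. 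Using $\tilde\alpha_1:=f_2^{p^e-1}\alpha_1^{[p^e]}$ and $\tilde\alpha_2:=f_1^{p^e-1}\alpha_2^{[p^e]}$ as the witnesses in the edge-map recipe of Remark~\ref{edge map on E_2} gives
\[d^{0,j}_2([\tilde\eta])=\overline{[-f_2^{p^e}\alpha_1^{[p^e]}+f_1^{p^e}\alpha_2^{[p^e]}]}=\overline{\Phi^e(\xi)}\in \frac{H^{j-1}_I(R)}{(f_1,f_2)H^{j-1}_I(R)},\]
where the second equality uses that $\Phi^e([-f_2\alpha_1+f_1\alpha_2])=[-f_2^{p^e}\alpha_1^{[p^e]}+f_1^{p^e}\alpha_2^{[p^e]}]$ and that $\Phi^e$ sends congruences modulo $(f_1,f_2)H^{j-1}_I(R)$ to congruences modulo $(f_1^{p^e},f_2^{p^e})H^{j-1}_I(R)\subseteq (f_1,f_2)H^{j-1}_I(R)$. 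Hence $\Phi^e(\xi)\in L$.

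The main obstacle is claim (I): $L$ is not \emph{a priori} Frobenius-stable because it is cut out by the $(f_1,f_2)$-quotient, while $\Phi^e$ naturally maps into the $(f_1^{p^e},f_2^{p^e})$-quotient. The essential technical device is the multiplier $f_1^{p^e-1}f_2^{p^e-1}$, which is the degree-$0$ component of a chain map $K^\bullet(\underline{f}^{p^e};-)\to K^\bullet(\underline{f};-)$ and simultaneously (a) forces $[\tilde\eta]$ into $E^{0,j}_2$ rather than into a Frobenius twist and (b) bridges the two quotients. Once (I) is in hand, (II) and the localization passage become essentially bookkeeping.
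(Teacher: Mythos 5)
Your proof is correct and is essentially the paper's argument: both rest on the exhaustion $H^{j-1}_I(R)=\bigcup_e R\cdot\Phi^e(H)$ (the paper's $\bF^e(H)$, via (\ref{image of truncated Cech in LC})) together with the multiplier $f_1^{p^e-1}f_2^{p^e-1}$, which converts the Frobenius-twisted witnesses $\delta^{j-1}(\alpha_i^{[p^e]})=f_i^{p^e}\eta^{[p^e]}$ into genuine $d^{0,j}_2$-data for the class $\widetilde{\tau}=f_1^{p^e-1}f_2^{p^e-1}\eta^{[p^e]}$. The only difference is packaging: you establish Frobenius-stability of $L$ directly at the level of the full \v{C}ech complex, whereas the paper routes the identical computation through the truncated double complex ${\bf D}_e$ and its edge map $\varphi^{0,j}_{2,e}$, identifying $\bF^e(L)$ with the preimage of $\im(\varphi^{0,j}_{2,e})$.
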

\begin{proof}
Since $H$ is finitely generated (\ref{image of truncated Cech in LC}), the Zariski-closedness follows from the `if and only if' statement.

If $d^{0,j}_2$ is surjective, then $L=H^{j-1}_I(R)$ and hence $H\subseteq L$.

Assume that $H\subseteq L$. Then $\bF^e(H)\subseteq \bF^e(L)$ for each $e$ since $\bF$ is an exact functor. Note that $\bF^e(H)$ is the submodule of $H^{j-1}_I(R)$ generated by elements that can be represented by elements in $\check{C}^{j-1}_e$ and that $\bF^e(L)$ is the submodule of $H^{j-1}_I(R)$ whose image in $H^{j-1}_I(R)/(f^{p^e}_1,f^{p^e}_2)H^{j-1}_I(R)$ is $\im(\varphi^{0,j}_{2,e})$. 

Let $[\eta]$ be an arbitrary element in $H^{j-1}_I(R)/(f_1,f_2)H^{j-1}_I(R)$. Pick an element $\eta_e$ in $\check{C}^{j-1}_e$ whose image in $H^{j-1}_I(R)/(f_1,f_2)H^{j-1}_I(R)$ is $[\eta]$. Then $[\eta_e]\in H^{j-1}_I(R)$ belongs to $\bF^e(H)$. Hence $[\eta_e]\in \bF^e(L)$; that is, there are $\tau_e\in \check{C}^j_e$, $\alpha_{1,e},\alpha_{2,e}\in \check{C}^{j-1}_e$ and $\beta_{1,e},\beta_{2,e}\in \ker(\delta^j_e)$ such that
\[\delta^j_e(\tau_e)=0,\ \delta^{j-1}_e(\alpha_{1,e})=f^{p^e}_1\tau_e,\ \delta^{j-1}_e(\alpha_{2,e})=f^{p^e}_2\tau_e\]
and that
\begin{align*}
[\eta_e]&=\varphi^{0,j}_{2,e}([\tau_e])\\
&=\overline{[-f^{p^e}_2\alpha_{1,e}+f^{p^e}_1\alpha_{2,e}]}+f^{p^e}_1\beta_{1,e}+f^{p^e}_2\beta_{2,e}\\
&=\overline{[-f_2(f^{p^e-1}_2\alpha_{1,e})+f_1(f^{p^e-1}_1\alpha_{2,e})]}+f_1(f^{p^e-1}_1\beta_{1,e})+f_2(f^{p^e-1}_2\beta_{2,e})
\end{align*}
Set $\widetilde{\tau}=f^{p^e-1}_1f^{p^e-1}_2\tau_e$, $\widetilde{\alpha}_1=f^{p^e-1}_2\alpha_{1,e}$ and $\widetilde{\alpha}_2=f^{p^e-1}_1\alpha_{2,e}$. Then
\[\delta^j_e(\widetilde{\tau})=0,\ \delta^{j-1}_e(\widetilde{\alpha}_{1})=f_1\widetilde{\tau},\ \delta^{j-1}_e(\widetilde{\alpha}_{2})=f_2\widetilde{\tau}\]
and 
\begin{align*}
[\eta_e]&=\overline{[-f_2(f^{p^e-1}_2\alpha_{1,e})+f_1(f^{p^e-1}_1\alpha_{2,e})]}+f_1(f^{p^e-1}_1\beta_{1,e})+f_2(f^{p^e-1}_2\beta_{2,e})\\
&=\overline{[-f_2\widetilde{\alpha}_1+f_1\widetilde{\alpha}_2]} +f_1(f^{p^e-1}_1\beta_{1,e})+f_2(f^{p^e-1}_2\beta_{2,e})\\
&=d^{0,j}_2(\widetilde{\tau}) 
\end{align*}
This proves that $[\eta_e]$ is in the image of $d^{0,j}_2$. This completes the proof.
\end{proof}

Combining Theorems \ref{closedness in one spectral implies the other}, \ref{closed for Koszul on two elements}, \ref{kernel of edge map}, and \ref{cokernel of edge map}, the following theorem is immediate:
\begin{theorem}
\label{closed for lc of ci}
Let $R$ be a noetherian regular ring of prime characteristic $p$. If $f_1,f_2\in R$ form a regular sequence in $R$, then
\[\Supp(H^j_I(\frac{R}{(f_1,f_2)}))\]
is Zariski-closed for each  ideal $I$ and each integer $j$.  
\end{theorem}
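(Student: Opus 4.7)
The plan is to assemble the tools built up in the earlier sections and then invoke Theorem \ref{closedness in one spectral implies the other} directly. That theorem reduces the problem to verifying that $\Supp(E^{i,j}_{\infty})$ is Zariski-closed for every pair $(i,j)$, where $E^{\bullet,\bullet}_r$ is the spectral sequence arising from the Koszul--\v{C}ech double complex ${\bf D}$ associated to $f_1,f_2$ and to a choice of generators $g_1,\dots,g_s$ of $I$. Once that closedness is in hand, the hypothesis that $f_1,f_2$ is a regular sequence makes the companion spectral sequence $'E^{\bullet,\bullet}_2$ collapse to a single row, which identifies $H^k(T^\bullet)$ with $H^k_I(R/(f_1,f_2))$; the filtration of $H^k(T^\bullet)$ by the $E^{i,k-i}_{\infty}$ then forces $\Supp(H^k_I(R/(f_1,f_2)))$ to be Zariski-closed.

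First I would record the degenerate shape of the spectral sequence when $c=2$. Because the Koszul cocomplex $K^\bullet(f_1,f_2;-)$ lives in cohomological degrees $0$, $1$, $2$, one has $E^{i,j}_2 = 0$ unless $i \in \{0,1,2\}$. In particular, the only potentially nonzero differential on or after the $E_2$-page is
\[
d^{0,j}_2 : E^{0,j}_2 \longrightarrow E^{2,j-1}_2,
\]
and so
\begin{align*}
E^{0,j}_\infty &= \ker(d^{0,j}_2), \\
E^{1,j}_\infty &= E^{1,j}_2 = H^1(K^\bullet(f_1,f_2; H^j_I(R))), \\
E^{2,j}_\infty &= \coker(d^{0,j+1}_2).
\end{align*}

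Next I would treat the three columns separately, invoking one closedness result for each. The middle column is dealt with by Theorem \ref{closed for Koszul on two elements}: since $R$ is regular of prime characteristic $p$ and $H^j_I(R)$ is an $F$-finite $F$-module, $\Supp\bigl(H^1(K^\bullet(f_1,f_2; H^j_I(R)))\bigr)$ is Zariski-closed. The outer columns are exactly the content of Theorems \ref{kernel of edge map} and \ref{cokernel of edge map}, which give Zariski-closedness of $\Supp(\ker d^{0,j}_2)$ and $\Supp(\coker d^{0,j}_2)$, respectively. Combining these three closedness statements supplies the hypothesis of Theorem \ref{closedness in one spectral implies the other}, and the result follows.

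The main conceptual obstacle would be controlling the transgression $d^{0,j}_2$, because it is defined on the typically non-finitely generated local cohomology modules $H^j_I(R)$; however, that obstacle has already been neutralized by the Frobenius truncation technology of \S\ref{truncated Cech complex}, which reduces $\ker d^{0,j}_2$ and $\coker d^{0,j}_2$ to quotients of the finitely generated submodules $K^j_0$ and $H \subseteq H^{j-1}_I(R)$ coming from the $0$-th truncated \v{C}ech complex. At the present stage the only arithmetic input still in play is the exactness of $\bF$ on a regular ring of characteristic $p$, which underlies both the $F$-module structure of $H^j_I(R)$ and the truncation isomorphisms of Proposition \ref{apply Frob to truncated Cech}; beyond that, the proof of Theorem \ref{closed for lc of ci} is a purely formal assembly of the preceding results.
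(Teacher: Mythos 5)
Your proposal is correct and takes essentially the same route as the paper, which obtains Theorem \ref{closed for lc of ci} as an immediate consequence of Theorems \ref{closedness in one spectral implies the other}, \ref{closed for Koszul on two elements}, \ref{kernel of edge map}, and \ref{cokernel of edge map}, exactly as you assemble them. Your indexing $E^{2,j}_{\infty}=\coker(d^{0,j+1}_2)$ is in fact the correct bookkeeping (consistent with Theorem \ref{cokernel of edge map}, which records $\Supp(E^{2,j-1}_{\infty})=\Supp(\coker d^{0,j}_2)$).
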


The following corollary is immediate.
\begin{corollary}
Let $R$ be a noetherian commutative ring of prime characteristic $p$ that has finitely many isolated singular points. Let $f_1,f_2\in R$ be a regular sequence. Then $H^j_I(R/(f_1,f_2))$ is Zariski-closed for each integer $j$ and each ideal $I$.
\end{corollary}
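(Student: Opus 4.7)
The plan is to reduce the corollary to Theorem~\ref{closed for lc of ci} by localizing away from the finite singular locus. Let $\Sigma = \{\mathfrak{m}_1,\dots,\mathfrak{m}_n\}$ denote the isolated singular points of $R$ (all maximal ideals by hypothesis), put $U = \Spec(R)\setminus\Sigma$, and write $M = H^j_I(R/(f_1,f_2))$. Since each $\mathfrak{m}_i$ is maximal, the ideal $\mathfrak{a} = \mathfrak{m}_1\cap\cdots\cap\mathfrak{m}_n$ has $V(\mathfrak{a}) = \Sigma$, so choosing generators $h_1,\dots,h_k$ of $\mathfrak{a}$ yields a basic open cover $U = D(h_1)\cup\cdots\cup D(h_k)$.

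The key observation is that each $R_{h_j}$ is a noetherian regular ring of prime characteristic $p$: the primes of $R_{h_j}$ correspond to the primes of $R$ not containing $h_j\in\mathfrak{a}$, hence distinct from every $\mathfrak{m}_i$, and such primes lie in the regular locus of $R$. Moreover $R\to R_{h_j}$ is flat, so $f_1,f_2$ remains a regular sequence in each $R_{h_j}$ for which the ideal $(f_1,f_2)R_{h_j}$ is proper (if not, $M_{h_j}=0$ and there is nothing to check). Theorem~\ref{closed for lc of ci} applied to $R_{h_j}$ with the ideal $IR_{h_j}$ then gives that
\[
\Supp_{R_{h_j}}(M_{h_j}) \;=\; \Supp(M)\cap D(h_j)
\]
is Zariski-closed in $D(h_j)$. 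Since the $D(h_j)$ cover $U$, gluing yields that $\Supp(M)\cap U$ is closed in $U$.

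To pass from closedness in $U$ to closedness in $\Spec(R)$, I would let $C = \overline{\Supp(M)\cap U}$ be the closure taken in $\Spec(R)$. Then $C\cap U = \Supp(M)\cap U$, while any additional point of $C$ lies in $\Sigma$ and is a specialization of some $\mathfrak{p}\in\Supp(M)\cap U$. Since the support of any $R$-module is closed under specialization, every such point already belongs to $\Supp(M)\cap\Sigma$. As $\Supp(M)\cap\Sigma$ is a subset of the finite set $\Sigma$ of maximal ideals, it is automatically closed in $\Spec(R)$, and hence
\[
\Supp(M) \;=\; C \cup \bigl(\Supp(M)\cap\Sigma\bigr)
\]
is a finite union of Zariski-closed sets, proving the corollary. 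The argument is essentially formal once Theorem~\ref{closed for lc of ci} is in hand; the only point requiring genuine attention is the verification that $U$ is actually covered by spectra of \emph{regular} rings, which is where the hypothesis that the singular points are isolated (and hence that the singular locus is cut out by the honest ideal $\mathfrak{a}$) is used.
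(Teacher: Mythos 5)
Your argument is correct and is exactly the localization argument that the paper leaves implicit (the corollary is stated there with no written proof beyond ``immediate''): cover the regular locus $U$ by basic opens $D(h_j)$, note $R_{h_j}$ is regular of characteristic $p$ and that local cohomology commutes with the flat map $R\to R_{h_j}$, apply Theorem~\ref{closed for lc of ci} on each piece, and absorb the finitely many singular closed points. The one step deserving an extra word is your assertion that every point of $C\setminus U$ is a specialization of some $\mathfrak{p}\in\Supp(M)\cap U$ --- in general the closure of a subset of $\Spec(R)$ is strictly larger than its set of specializations --- but it does hold here: each irreducible component $Z$ of $C=\overline{\Supp(M)\cap U}$ meets $\Supp(M)\cap U$ densely, so its generic point $\eta$ cannot be a closed point of $\Sigma$ (else $Z=\{\eta\}\subseteq\Sigma$ would be disjoint from $U$), whence $\eta\in C\cap U=\Supp(M)\cap U$ and $Z\subseteq\Supp(M)$ by specialization-closedness, completing your identity $\Supp(M)=C\cup(\Supp(M)\cap\Sigma)$.
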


\end{document}